\documentclass[12pt]{article}
\usepackage{color}
\usepackage{amsmath}

\usepackage{enumerate}
\usepackage{extarrows}
\usepackage{geometry}
\usepackage{authblk} 
\usepackage{hyperref} 
\usepackage{color}
\usepackage{ntheorem}
\usepackage{enumitem}

\usepackage[mathscr]{euscript}
\usepackage[utf8]{inputenc}
\def\q{\hfill\rule{1ex}{1ex}}
\def\0{\emptyset}

\def\q{\hfill\rule{1ex}{1ex}}

\newtheorem{theorem}{Theorem}[section]

\newtheorem{lemma}[theorem]{Lemma}
\newtheorem{claim}[theorem]{Claim}

\newtheorem{proposition}[theorem]{Proposition}

\newenvironment{provekplF=1}{{\noindent\it Proof of Theorem  \ref{thm: when beta F=1}}.}{\hfill $\square$\par}
\newenvironment{proof}{{\noindent\it Proof.}}{\hfill $\square$\par}

\usepackage{graphicx}

\usepackage{enumerate}
\usepackage{enumitem}

\usepackage{
	amsmath,			
	amssymb,			
	enumerate,		    
	graphicx,			
	lastpage,			
	multicol,			
	multirow,			
	pifont,			    
}

\usepackage[numbers]{natbib}

\newcommand{\mh}{\mathcal{H}}
\newcommand{\mg}{\mathcal{G}}

\newcounter{cases}
\newcounter{subcases}[cases]

\begin{document}
\title{Counting $t$-wise $L$-intersecting cliques with prescribed intersection sizes}
\author{
    {\small\bf Yiyan Zhan}\thanks{email:  zhanyy24@mails.tsinghua.edu.cn}\quad
    {\small\bf Yichen Wang}\thanks{email:  wangyich22@mails.tsinghua.edu.cn}\quad
    {\small\bf Mei Lu}\thanks{email: lumei@tsinghua.edu.cn}\\
    {\small Department of Mathematical Sciences, Tsinghua University, Beijing 100084, China.}\\
}

\date{}

\maketitle\baselineskip 16.3pt

\begin{abstract}
    Let $r,t\ge 3$ be integers and $L=\{\ell_1,\ell_2,\ldots,\ell_s\}\subseteq [0,r-1]$
a fixed set of integers with $|L|\neq r$ and $\ell_1<\ell_2<\cdots<\ell_s$.
For each integer $n$, let $\Psi_{r}(n,L,t)$ be the maximum
number of $r$-cliques in an $n$-vertex graph whose $r$-cliques, viewed as a family of $r$-subsets of the vertex set, form a $t$-wise $L$-intersecting family. In this paper, we prove that $\Psi_{r}(n,L,t)=o(n^{|L|})$ when the sequence $\ell_1,\ell_2,\ldots,\ell_s,r$ does not form an arithmetic progression, and we give an asymptotic formula for $\Psi_{r}(n,L,t)$ when this sequence does form an arithmetic progression. When $t=2$, our results are exactly   Helliar and Liu's results ‌\cite{helliar2024generalizedturanextensiondezaerdhosfrankl}.
\end{abstract}
{\bf Keywords:} Tur\'an problem, the associated graph, $t$-wise $L$-intersecting.
\vskip.3cm
\section{Introduction}
The classical Tur\'an theorem~\cite{turan1941external} states that for $n\ge q\ge 2$, the maximum number of edges in an $n$-vertex $K_{q+1}$-free graph is attained by the Tur\'an graph $T(n,q)$,
where $T(n,q)$ is the balanced complete $q$-partite graph on $n$ vertices.
Many variants have since been studied. One broad direction is to determine $\text{ex}(n,H,\mathcal{F})$, where $\text{ex}(n,H,\mathcal{F})$ denotes the maximum number of copies of $H$ in an $n$-vertex $\mathcal{F}$-free graph. Tur\'an's theorem is the case $\text{ex}(n,K_2,\{K_{q+1}\})$, and Alon and Shikhelman \cite{ALON2016146} made significant progress for various pairs $(H,\mathcal{F})$. Motivated by results
from extremal set theory, Helliar and Liu \cite{helliar2024generalizedturanextensiondezaerdhosfrankl} studied a generalized Tur\'an problem.

For a positive integer $n$, let $[n]=\{1,2,\ldots,n\}$ and  $2^{[n]}$  the power set of $[n]$. For  ‌nonnegative integers $a$ and $b$ with $a<b$, let $[a,b]=\{a,a+1,\ldots,b\}$.
For integers $n\ge r\ge 1$, $\binom{[n]}{r}$ denotes the collection of all subsets of $[n]$ of size $r$. We call a  family $\mathcal{F}\subseteq\binom{[n]}{r}$ an ($n$-vertex) $r$-graph, and its elements are called hyperedges.
Given a subset $L\subseteq [0,r-1]$, we say that an $r$-graph $\mathcal{F}$ is $L$-intersecting if $|e_1\cap e_2|\in L$ for any two distinct hyperedges $e_1,e_2\in\mathcal{F}$.
For integers $n\ge r\ge 1$ and a set $L\subseteq[0,r-1]$, let $\Phi_r(n,L)$ denote the maximum size of an $n$-vertex $L$-intersecting $r$-graph.
Determining the value of $\Phi_{r}(n,L)$ is a central topic in extremal set theory and is connected to many classical results. The well-known Erd\H{o}s-Ko-Rado theorem \cite{erdos1961intersection} asserts that for integers $r>t\ge 1$, there existed an integer $n_0(r,t)$ such that $\Phi_{r}(n,\{t,t+1,\ldots,r-1\})\le \binom{n-t}{r-t}$ for all $n\ge n_0(r,t)$. The following theorem of Deza, Erd\H{o}s and Frankl established a general upper bound for $\Phi_{r}(n,L)$. It is tight in some cases, but finding matching lower-bound constructions is difficult in general.
\begin{theorem}[Deza-Erd\H{o}s-Frankl \cite{https://doi.org/10.1112/plms/s3-36.2.369}]\label{deza}
    Let $r\ge 3$ and $n\ge2^r r^3$ be integers, and let $L\subseteq [0,r-1]$. Then
    \begin{equation*}
        \Phi_{r}(n,L)\le\prod_{\ell\in L}\frac{n-\ell}{r-\ell}.
    \end{equation*}
    When $L = \emptyset$, we use the convention $\Phi_{r}(n,\emptyset)=1$.
\end{theorem}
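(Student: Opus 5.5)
The plan is to use the $\Delta$-system (sunflower) method and argue by induction on $|L|$. The case $L=\emptyset$ is the stated convention: at most one hyperedge. Let $\mathcal{F}\subseteq\binom{[n]}{r}$ be $L$-intersecting with $L=\{\ell_1<\dots<\ell_s\}$.

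\emph{Kernels.} Call a set $A$ a \emph{kernel} if $A$ is the common core of a sunflower in $\mathcal{F}$ with at least $r+1$ petals. For $F\in\mathcal{F}$, let $\mathcal{K}(F)=\{A\subsetneq F: A \text{ is a kernel}\}\cup\{F\}$. Two facts drive the argument.
\begin{enumerate}
\item Every kernel $A$ has $|A|\in L$, since two petals of the sunflower meet exactly in $A$.
\item $\mathcal{K}(F)$ is closed under intersection. Given kernels $A,B\subseteq F$, their sunflowers have at least $r+1$ petals, so we can choose members $F_A\supseteq A$ and $F_B\supseteq B$ whose petals avoid $F\setminus A$, resp.\ $F\setminus B$, and each other outside $A\cup B$. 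Then $F_A\cap F_B=A\cap B$. Iterating this choice to produce many such pairs gives a large sunflower with core $A\cap B$.
\end{enumerate}
By the Erd\H{o}s--Rado sunflower lemma, any family of more than $r!\,r^{r}$ members containing a common $\ell$-set, but with pairwise intersections larger than needed, contains a sunflower with $r+1$ petals. This is where $n\ge 2^r r^3$ will enter: it makes the ``non-kernel'' configurations too small to matter.

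\emph{Splitting by kernel structure.} Next I separate the hyperedges $F$ by the structure of $\mathcal{K}(F)$. Say $F$ is \emph{saturated} if every $(\ell_s+1)$-subset of $F$ meets the kernels in the right way. Concretely, $\mathcal{K}(F)$ should contain a chain
\[
A_1\subsetneq A_2\subsetneq\cdots\subsetneq A_s\subsetneq F \quad\text{with } |A_i|=\ell_i .
\]
Moreover, each $A_{i+1}$ should be determined by $A_i$ together with any single element of $A_{i+1}\setminus A_i$, using closure under intersection and the fact that kernel sizes lie in $L$.

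For saturated $F$ I would double count pairs $(F,\text{such a chain})$, building the chain step by step. Given $A_i$, the next kernel $A_{i+1}$ is fixed by one new element chosen from $n-\ell_i$ possibilities. Each $A_{i+1}$ arises from $r-\ell_i$ such choices inside $F$, because every element of $F\setminus A_i$ generates a kernel and these kernels partition $F\setminus A_i$ into blocks of equal size. This gives
\[
\#\{\text{saturated }F\}\le \prod_{i=1}^{s}\frac{n-\ell_i}{r-\ell_i},
\]
where I adopt the convention $A_0=\emptyset$, i.e.\ $\ell_0=0$ is absorbed into $\ell_1$.

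For non-saturated $F$, some set $B\subseteq F$ with $|B|\notin L$, or some missing link in the chain, is not covered by any kernel. Then the members of $\mathcal{F}$ through the relevant $\ell_i$-set form an $(L\setminus\{\ell_i\})$-intersecting family in the link. By induction on $|L|$ this family has at most $\prod_{j\ne i}\frac{n-\ell_j}{r-\ell_j}$ members up to the sunflower constant. Summing over the at most $\binom{r}{\ell_i}$ choices inside $F$ gives $O_r(n^{s-1})$ such hyperedges. Since $n\ge 2^r r^3$, this term fits into the slack between the saturated count and the claimed product.

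\emph{Main obstacle.} I expect the hardest step to be getting the exact constant in the saturated count rather than only an $O(n^s)$ bound. This requires the ``equal block'' structure: every element of $F\setminus A_i$ generates a minimal kernel above $A_i$, and all these kernels have the same size $\ell_{i+1}$, with none skipping a level. I would first try to derive this from closure under intersection plus a rank (linear-algebra) argument on the lattice $\mathcal{K}(F)$, in the spirit of Frankl's rank lemma. If that fails, I would retreat to charging the non-uniform configurations to the $O(n^{s-1})$ error term and absorbing them using the quantitative hypothesis $n\ge 2^r r^3$.
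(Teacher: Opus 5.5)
The paper does not prove this statement. It quotes it from Deza--Erd\H{o}s--Frankl and uses it only as a black box, so your proposal has to stand on its own. As written it has genuine gaps.

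\textbf{The final accounting cannot give the exact bound.} You bound the saturated members by $\prod_{\ell\in L}\frac{n-\ell}{r-\ell}$ and then add $O_r(n^{s-1})$ non-saturated members. There is no slack to absorb them, because your saturated bound \emph{is} the target. The sum only gives $\prod_{\ell\in L}\frac{n-\ell}{r-\ell}+O_r(n^{s-1})$, which is an asymptotic statement. Your fallback of charging non-uniform configurations to the error term has the same defect.

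A largeness hypothesis on $n$ can absorb an $O(n^{s-1})$ term only through a dichotomy, never through a sum. Either $|\mathcal{F}|=O_r(n^{s-1})$, which is then below the product, or $\mathcal{F}$ is rigid enough that the exact count applies to \emph{every} member.

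The convention ``$\ell_0=0$ absorbed into $\ell_1$'' hides a related problem. For $\ell_1>0$, the chain count needs all counted members to share one $\ell_1$-set; otherwise you must also sum over the choice of $A_1$. This is repairable: two distinct $\ell_1$-kernels would, by your two-petal argument, give two members meeting in fewer than $\min L$ points. But the members not containing that set are then again handled additively.

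\textbf{Fact 2 is false in the full family.} The two-petal construction correctly gives $F_A,F_B$ with $F_A\cap F_B=A\cap B$, hence $|A\cap B|\in L$. It does not iterate, because any two members of the sunflower with core $A$ meet in $A$. So a sunflower with core $A\cap B$ can use at most one member from each of the two sunflowers.

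For example, take $r=4$, $F=\{1,2,3,4\}$, five sets $\{1,2,a_i,a_i'\}$ and five sets $\{2,3,b_j,b_j'\}$ on new vertices. This family is $\{1,2\}$-intersecting, and $\{1,2\}$ and $\{2,3\}$ are kernels inside $F$. Yet no sunflower with core $\{2\}$ has more than two members.

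Intersection-closure holds only on a homogeneous subfamily such as F\"uredi's $\mathcal{F}^*$ (Theorem~\ref{furedi stru}). Passing to it costs the factor $c(p,r)$, which is harmless for $O(\cdot)$ statements like Lemma~\ref{key key lem} but fatal for an exact inequality. So two steps are left unsupported: the rank/``equal block'' step, which you already flag, and the claim that non-saturated members lie in $(L\setminus\{\ell_i\})$-intersecting links.

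\textbf{A route that works.} Get the exact count from links, and keep every $\Delta$-system loss inside the small alternative. Induct on $r$.

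\emph{Case $0\in L$.} Double count: $r|\mathcal{F}|=\sum_x|\mathcal{F}(x)|$. Each link $\mathcal{F}(x)$ is an $(r-1)$-uniform, $\{\ell-1:\ell\in L\setminus\{0\}\}$-intersecting family on $n-1$ points. Induction then gives exactly $|\mathcal{F}|\le\frac{n}{r}\prod_{\ell\in L\setminus\{0\}}\frac{n-\ell}{r-\ell}$, which is the target.

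\emph{Case $0\notin L$ and some $\ell_1$-set $T$ lies in every member.} Delete $T$. The product is invariant under $(n,r,\ell)\mapsto(n-\ell_1,r-\ell_1,\ell-\ell_1)$, so you are back in the first case.

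\emph{Remaining case.} Show $|\mathcal{F}|=O_r(n^{s-1})$ using $\mathcal{F}^*$, where there are two possibilities.
\begin{itemize}
\item No two members of $\mathcal{F}^*$ meet in $\ell_1$ points. Then Ray-Chaudhuri--Wilson gives $|\mathcal{F}^*|\le\binom{n}{s-1}$.
\item Otherwise, by (ii) and (iv) each $\mathcal{I}(F)$ has a unique $\ell_1$-member. By the two-petal argument this is one set $T$ common to all of $\mathcal{F}^*$. By assumption some $F_0\in\mathcal{F}$ has $T\not\subseteq F_0$, and this forces every other $F\in\mathcal{F}^*$ to meet $F_0\setminus T$. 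Members through $T\cup\{y\}$ form an $(L\setminus\{\ell_1\})$-intersecting family, so $|\mathcal{F}^*|\le 1+r\binom{n}{s-1}$.
\end{itemize}

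This proves the bound for $n\ge n_0(r)$. Reaching the explicit threshold $2^r r^3$ requires the constant-tracking of the original paper.
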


Motivated by the study of $L$-intersecting $r$-graphs, Helliar and Liu \cite{helliar2024generalizedturanextensiondezaerdhosfrankl} introduced a generalized Tur\'an problem.
For $r\geq 3$, given a graph $G$, define its associated $r$-graph $\mh_{G}^{r}=\{S\in\binom{V(G)}{r}: G[S]\cong K_r\}$, where $G[S]$ is the subgraph of $G$ induced by the vertex set $S$. For a set of integers $L\subseteq [0,r-1]$, a graph $G$ is called $(K_r,L)$-intersecting if its associated $r$-graph $\mh_{G}^{r}$ is $L$-intersecting. Denote by $N(K_r,G)$ the number of copies of $K_r$ in $G$. Then $N(K_r,G)=|\mh_{G}^{r}|$. Define
\begin{equation*}
    \Psi_{r}(n,L)=
    \max_{\substack{G\text{ is an }n\text{-vertex}\\(K_r,L)\text{-intersecting graph}}}
    N(K_r,G).
\end{equation*}
This framework includes several classical results.
For example, the celebrated Ruzsa-Szemer\'edi Theorem~\cite{ruzsa1978triple} states that $\Psi_{r}(n,\{0,1\})=n^{2-o(1)}$.
By the definition and Theorem~\ref{deza},
\begin{equation*}
    \Psi_{r}(n,L)\le \Phi_{r}(n,L)\le\prod_{\ell\in L}\frac{n-\ell}{r-\ell}.
\end{equation*}
Helliar and Liu improved this upper bound by a constant factor.
\begin{theorem}[Helliar-Liu \cite{helliar2024generalizedturanextensiondezaerdhosfrankl}]\label{heliar} Suppose that $r\ge 3$ and $n \ge(2r)^{
r+1}$ are integers, and that $L\subseteq
[0, r-1]$ is a subset of size $s \in [2, r-1]$. Then
\begin{equation*}
    \Psi_{r}(n,L)\le \left(1-\frac{1}{3r}\right)\prod_{\ell\in L}\frac{n-\ell}{r-\ell}.
\end{equation*}
\end{theorem}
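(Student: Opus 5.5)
We prove the statement by redoing the Deza--Erd\H{o}s--Frankl argument behind Theorem~\ref{deza} on $\mh=\mh^r_G$, and extracting a constant-factor loss from the fact that $\mh$ comes from a graph. First we record the Deza--Erd\H{o}s--Frankl structure. By the $\Delta$-system lemma (with $n\ge (2r)^{r+1}$), a subset $T\subset F\in\mh$ is called \emph{rich} if it is the kernel of a sunflower in $\mh$ with more than $r$ petals. For every $F\in\mh$ other than a set of at most $O_r(n^{s-1})$ exceptional edges, the family $\mathcal{T}(F)$ of rich subsets of $F$ has two properties: it is closed under intersection, and every $T\in\mathcal{T}(F)$ has $|T|\in L$. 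The DEF bound $\prod_{\ell\in L}\frac{n-\ell}{r-\ell}$ then comes from a double count. Along a maximal chain of rich sets $T_1\subsetneq T_2\subsetneq\cdots\subsetneq T_s\subsetneq F$, with $|T_i|=\ell_i$, the petals over $T_i$ inside the sunflower over $T_{i-1}$ are pairwise disjoint. So the number of ways to extend $T_{i-1}$ to $T_i$ is at most $\frac{n-\ell_{i-1}}{\ell_i-\ell_{i-1}}$, up to the standard normalization. Multiplying along the chain and dividing by the number of chains per edge gives the product bound.

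The second step uses the graph. Since $s\ge 2$, in the non-degenerate case $\mathcal{T}(F)$ contains two distinct rich sets $T,T'$ whose union $T\cup T'$ is not rich, or is at least not of size in $L$. I will take $T\supset T_1$ and $T'\supset T_1$ with $|T|=|T'|=\ell_2$ whenever possible, and $T_1=\emptyset$, with $\ell_1=0$, otherwise. Choose cliques $A\supset T$ and $A'\supset T'$ from the two sunflowers with petals disjoint from $F$. Then every vertex of $A\setminus T$ and every vertex of $T'$ lie together with $T\cup T'$ in $F$, hence are pairwise adjacent on the $F$ side. Now pick $r$ vertices from the petals so that they contain a prescribed number of vertices of $F$. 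Completeness then forces a new $r$-clique $F^*$ with $F^*\cap F=T\cup T'$ as soon as the petal vertices are mutually adjacent. If $|T\cup T'|\notin L$, this is forbidden. So for a positive proportion of pairs of petals (over $T$ and over $T'$), some edge between them is missing. In other words, the ``full product'' configuration, in which every chain extension is realized, cannot hold simultaneously at two branching points.

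The third step converts this into the loss $1-\frac1{3r}$. For each good edge $F$, count the chains in $\mathcal{T}(F)$. In the DEF count the number of maximal chains equals the number of orderings compatible with a Boolean-like structure on $F$. The obstruction above shows that $\mathcal{T}(F)$ cannot contain all $\ell_2$-subsets that would be needed for equality. Equivalently, the extension counts at the second level, summed over choices, lose at least a factor $(1-\frac{1}{r-\ell_1})$ or so relative to $\frac{n-\ell_1}{\ell_2-\ell_1}$. I would do the weighted count edge by edge and show that each good edge carries weight at least $(1-\frac1{3r})^{-1}$ times the DEF weight. The $O_r(n^{s-1})$ exceptional edges are absorbed because $n\ge(2r)^{r+1}$ makes them a $\le\frac{1}{100r}$ fraction of the main term.

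I expect the main obstacle to be the second step: arranging that the forbidden clique $F^*$ really has its intersections with every edge inside $L$ violated, and not just its intersection with $F$. I also need that this happens for a constant fraction of the petal pairs rather than for one pair. If the direct argument fails, I would fall back on averaging over a random choice of one petal from each of the $>r$ petals over $T$ and over $T'$, and applying Tur\'an-type counting in the bipartite graph between petal sets to guarantee many missing edges.
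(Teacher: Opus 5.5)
The paper only quotes this theorem from Helliar and Liu and gives no proof, so I am judging your proposal on its own. It has a genuine gap. Steps 2--3 look for the loss \emph{locally}, in the rich-set structure $\mathcal{T}(F)$ of a single clique $F$, and no argument of that kind can work.

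To see this, take the construction of Section~4 with $\ell_1=0$ and $d\ge 2$. Here $L=\{0,d,\ldots,(s-1)d\}$, $r=sd$, and $G$ is the Tur\'an graph $T(m,s)$ with every vertex blown up into a $K_d$ (an atom).
\begin{itemize}
\item Every $r$-clique $F$ is a union of $s$ atoms from different parts.
\item The rich subsets of $F$ are exactly the unions of fewer than $s$ of these atoms. So any union of rich subsets of $F$ is either rich or equal to $F$, and the situation Step 2 needs ($T\cup T'\neq F$ with $|T\cup T'|\notin L$) never occurs.
\item Worse, $\mathcal{T}(F)$ is identical to what it is in the family of \emph{all} unions of $s$ blocks of a partition of $[n]$ into $d$-sets. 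That family has $\binom{n/d}{s}=\prod_{\ell\in L}\frac{n-\ell}{r-\ell}$ members, so it attains the Deza--Erd\H{o}s--Frankl bound.
\end{itemize}
Hence every clique of $G$ carries exactly the DEF weight. Your Step 3 claim, that each good edge has weight at least $(1-\frac1{3r})^{-1}$ times the DEF weight, is therefore false here, even though the true count is only about $\frac{s!}{s^s}$ times the DEF bound.

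Step 2 also has local errors.
\begin{itemize}
\item You chose $A,A'$ to meet $F$ only in $T,T'$, so $A\setminus T$ does not lie in $F$, contrary to what you then assert.
\item The set $T\cup T'\cup P\cup P'$ is a clique only if $P$ is complete to $T'\setminus T$ and $P'$ is complete to $T\setminus T'$. Nothing guarantees this, and in the example with $s\ge3$ it fails.
\item A missing edge between two petals deletes no hyperedge from the DEF count, so Step 2 would not feed into Step 3 even if it succeeded.
\end{itemize}
Step 3 contains no computation producing $1-\frac1{3r}$. Your claim that the exceptional edges are negligible for $n\ge(2r)^{r+1}$ is also unsupported: the standard $\Delta$-system constants are of order $r!\,r^r$, which is not small compared with $(2r)^{r+1}$.

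The missing idea is a \emph{global} step that converts non-edges of $G$ into fewer $r$-cliques. In the example, the loss comes only from $G$ being far from complete. Your fallback of finding many missing edges points this way, but you never turn missing edges into a deficit of hyperedges. Here is one way to do it when $0\in L$; the case $\ell_1>0$ needs a separate reduction.
\begin{itemize}
\item Average over vertices: $r\,|\mathcal{H}_G^r|=\sum_x d(x)$.
\item The cliques through $x$ form an $\{\ell-1:\ell\in L\setminus\{0\}\}$-intersecting clique family in $G[N_G(x)]$. The DEF bound on $\deg_G(x)$ vertices then gives $d(x)\lesssim\prod_{\ell\in L\setminus\{0\}}\frac{\deg_G(x)-\ell+1}{r-\ell}$.
\item Since $s\ge2$, at least one factor involves $\deg_G(x)$. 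Summing shows that $e(G)\le(1-\frac1{3r})\binom n2$ forces $|\mathcal{H}_G^r|\le(1-\frac1{3r})\prod_{\ell\in L}\frac{n-\ell}{r-\ell}$, up to handling low-degree vertices.
\item If instead $e(G)>(1-\frac1{3r})\binom n2$, an explicit supersaturation bound of Moon--Moser type gives more than $n^{r-1}\ge\prod_{\ell\in L}\frac{n-\ell}{r-\ell}$ copies of $K_r$ once $n\ge(2r)^{r+1}$. This contradicts the DEF bound.
\end{itemize}
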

Later, Zhao and Zhang~\cite{zhao2025countingcliquesprescribedintersection} proved the following sharper result.
\begin{theorem}[Zhao-Zhang \cite{zhao2025countingcliquesprescribedintersection}]\label{thm: zhao}
    Let $r\ge 3$ be an integer and let $L=\{\ell_1,\ell_2,\ldots,\ell_s\}\subseteq [0,r-1]$ be a set of size $s\notin \{1,r\}$ with $\ell_1<\ell_2<\cdots<\ell_s$.
    \begin{enumerate}[label=(\arabic*)]
        \item  If $\ell_1,\ell_2,\ldots,\ell_s,r$ do not form an arithmetic progression, then $\Psi_{r}(n,L)=o(n^s)$. If in addition $r-\ell_s = \ell_s-\ell_{s-1}$, then $\Psi_{r}(n,L)=O(n^{s-1})$.
        \item If $\ell_1,\ell_2,\ldots,\ell_s,r$ form an arithmetic progression with common difference $d$, then, when $n$ is sufficiently large,
        \begin{equation*}
            \begin{aligned}
                \Psi_{r}(n,L)
                &=N\left(K_s,T\left(\left\lfloor\frac{n-\ell_1}{d}\right\rfloor, s\right)\right)
                =(1+o(1))\left(\frac{n-\ell_1}{r-\ell_1}\right)^s.
            \end{aligned}
        \end{equation*}
    \end{enumerate}
\end{theorem}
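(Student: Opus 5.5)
The plan has two halves: a construction for the lower bound in (2), and the delta-system method of Deza–Erdős–Frankl and Füredi for all the upper bounds.

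\textbf{Lower bound in (2).} Put $m=\lfloor (n-\ell_1)/d\rfloor$ and start from $T(m,s)$. Replace each vertex by a clique of size $d$, joining two blobs completely exactly when the original vertices were adjacent. Then add a clique $K$ of $\ell_1$ vertices joined to everything, and leave the remaining fewer than $d$ vertices isolated. Since $\ell_1+sd=r$, a largest clique consists of $K$ together with $s$ full blobs, one from each part. So the $r$-cliques correspond bijectively to the copies of $K_s$ in $T(m,s)$. Two such cliques meet in $\ell_1+jd$ vertices, where $j\in[0,s-1]$ is the number of blobs they share. Hence the graph is $(K_r,L)$-intersecting, and it has $N(K_s,T(m,s))=(1+o(1))(m/s)^s=(1+o(1))\bigl(\frac{n-\ell_1}{r-\ell_1}\bigr)^s$ copies of $K_r$.

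\textbf{Structure step for the upper bound.} Apply Füredi's structure theorem to $\mathcal F=\mathcal H^r_G$. This gives a subfamily $\mathcal F^*\subseteq\mathcal F$ with $|\mathcal F^*|\ge c(r)|\mathcal F|$ that is $r$-partite with respect to a fixed partition, together with a single intersection structure $\mathcal J\subseteq 2^{[r]}$ with the following properties. Every $e\in\mathcal F^*$ has $\{e\cap f: f\in\mathcal F^*\}$ equal to $\mathcal J$ (in partition coordinates). $\mathcal J$ is closed under intersection. Every $A\in\mathcal J$ is the kernel of a sunflower in $\mathcal F$ with at least $r+1$ petals. Moreover $|A|\in L$ for all proper $A\in\mathcal J$, and $|\mathcal F^*|=O(n^{\operatorname{rank}\mathcal J})$. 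If $|\mathcal F|\ge \varepsilon n^s$ for some $\varepsilon>0$, then $\mathcal J$ must have rank $s$. Since there are only $s$ sizes available, there is a maximal chain $A_1\subsetneq\cdots\subsetneq A_s\subsetneq[r]$ in $\mathcal J$ with $|A_i|=\ell_i$.

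\textbf{Using the graph to pin down $\mathcal J$.} This is the genuinely graph-theoretic input. Take two kernels $A,B\in\mathcal J$ with large sunflowers. Pick petals that avoid each other, and observe that the union of a petal over $A$ and a petal over $B$ is often a clique of $G$, because all required edges are already present inside members of $\mathcal F$. Such a clique contains an $r$-clique whose intersections with existing cliques realize sizes like $|A\cup B|$ or $r-|P|$. Requiring these sizes to lie in $L$ should force $\mathcal J$ to be closed under unions as well. It should then follow that $\mathcal J$ is the Boolean lattice generated by a core $C=A_1$ of size $\ell_1$ and $s$ disjoint blocks partitioning $[r]\setminus C$. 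Because the size of any union of blocks must lie in $L\cup\{r\}$, all blocks have the same size $d$. This gives $L\cup\{r\}=\{\ell_1+jd\}$, an arithmetic progression, so in the non-AP case $|\mathcal F|=o(n^s)$. For the extra claim when $r-\ell_s=\ell_s-\ell_{s-1}$, I would redo the argument at rank $s$ with $\varepsilon$ fixed and show directly that rank $s$ is impossible, which gives $O(n^{s-1})$.

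\textbf{Sharp count in the AP case and the main obstacle.} Here almost all $r$-cliques have the form core $\cup$ $s$ blocks. I would contract each $d$-block to a vertex, giving an auxiliary graph on at most $(n-\ell_1)/d$ vertices in which these cliques become $K_s$'s. Every $K_{s+1}$ there would yield two $r$-cliques sharing $\ell_1+(s-1)d$ vertices plus an extra block, producing an $r$-clique with forbidden intersection, so the auxiliary graph is $K_{s+1}$-free. Zykov's theorem then bounds the count by $N(K_s,T(\lfloor (n-\ell_1)/d\rfloor,s))$, and the $o(n^s)$ exceptional cliques are removed by a stability and cleaning argument. I expect the main obstacle to be the union-closure step: choosing petals so that the glued vertex set really is a clique and really creates a forbidden intersection size, uniformly over all configurations of $\mathcal J$.
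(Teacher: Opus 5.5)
Your lower-bound construction is correct; it is the one the paper uses. The upper-bound plan, however, fails at the step you flagged, and the failure is not technical. The claim that the graph structure forces $\mathcal J$ to be union-closed, hence a Boolean lattice, is false.

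Your passage from $\operatorname{rank}\mathcal J=s$ to ``$L\cup\{r\}$ is an AP'' never uses $\varepsilon$. If it were valid, it would show $\operatorname{rank}\mathcal J\le s-1$ in every non-AP case, i.e.\ $|\mathcal F|\le c^{-1}|\mathcal F^*|=O(n^{s-1})$. Take $r=3$, $L=\{0,1\}$, which is not an AP since $1-0\ne 3-1$. Your argument would then give $\Psi_3(n,\{0,1\})=O(n)$. This contradicts Ruzsa--Szemer\'edi: there are graphs with $n^{2-o(1)}$ triangles in which every edge lies in exactly one triangle, so their triangles are $\{0,1\}$-intersecting.

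Concretely, for such a graph F\"uredi's theorem gives $|\mathcal F^*|\ge c\,n^{2-o(1)}>n$. If a singleton $\{i\}$ were missing from $\mathcal J$, distinct members of $\mathcal F^*$ would have distinct vertices in the $i$-th part, so $|\mathcal F^*|\le n$. Hence all three singletons lie in $\mathcal J$, whose proper part is $\{\emptyset,\{1\},\{2\},\{3\}\}$. This $\mathcal J$ has rank $2=s$ and is not union-closed.

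The gluing mechanism is also wrong as stated. Nothing forces edges between $P_A\setminus P_B$ and $P_B\setminus P_A$. Even in your own blow-up construction, a petal $a\cup b'$ over a blob $a$ and a petal $a'\cup b$ over a blob $b$, with $a,a'$ in the same part, do not span a clique.

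The missing ingredient is quantitative. The $o(n^s)$ in (1) is of Ruzsa--Szemer\'edi type and cannot come from bounds $|\mathcal F^*|=O(n^{\operatorname{rank}\mathcal J})$ alone. The paper only cites this theorem, but its proof of the $t$-wise version (Theorem~\ref{thm 3}) follows Zhao--Zhang, specializes to $t=2$, and shows where the removal lemma enters.

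The base case is $L=\{0,\ell\}$ with $r\ne 2\ell$ (Lemma~\ref{lem 1}). First delete vertices lying in boundedly many $r$-cliques. Sunflowers in the links then force, through each remaining vertex, an $\ell$-set contained in every $r$-clique through that vertex. These $\ell$-sets partition the vertex set, and each $K_r$ is a union of $r/\ell\ge 3$ of them. Contracting them gives a graph whose $K_{r/\ell}$'s are $\{0,1\}$-intersecting, and the hypergraph removal lemma (Lemma~\ref{lem: [0,s]}) gives $o(n^2)$.

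F\"uredi's theorem is used only to reduce general $L$ to this case (Lemma~\ref{key key lem}). Either some size $\ell_i$ does not occur in $\mathcal J$, giving $|\mathcal F|\le c^{-1}\Phi_r(n,L\setminus\{\ell_i\})=O(n^{s-1})$, or $|\mathcal F^*|\le\Phi_{\ell_i}(n,\{\ell_1,\dots,\ell_{i-1}\})\,\Psi_{r-\ell_i}(n-\ell_i,\{0,\ell_{i+1}-\ell_i,\dots,\ell_s-\ell_i\})$. For $i=s-1$ and $r-\ell_s\ne\ell_s-\ell_{s-1}$, the latter is $O(n^{s-2})\cdot o(n^2)$. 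In the sub-case $r-\ell_s=\ell_s-\ell_{s-1}$, a purely hypergraph bound does suffice (Lemma~\ref{t count}), so there a ``rank $s$ is impossible'' argument like yours is legitimate.

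Your AP-case count rests on the same unproved Boolean structure. The paper obtains ``almost every $r$-clique is the core plus $s$ blocks'' directly, via atoms. It then uses link bounds of the type in Theorem~\ref{thm: t L} to show that each vertex outside the $d$-atoms lies in $o(n^{s-1})$ cliques.

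Two smaller points remain. For $d=1$, a $K_{s+1}$ of blocks only yields $r$-cliques meeting in $r-1=\ell_s\in L$. You therefore need a second, disjoint clique, i.e.\ $2K_{s+1}$-freeness rather than $K_{s+1}$-freeness. Also, the exact Tur\'an-graph value in (2) needs the stability step you only sketched.
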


We next recall a generalized extremal set problem that motivates our extension of Helliar and Liu's problem. For a  family $\mathcal{F}\subseteq 2^{[n]}$ and a set of nonnegative integers $L=\{\ell_1,\ell_2,\ldots,\ell_s\}$, we call $\mathcal{F}$ a $t$-wise $L$-intersecting family if the cardinality of the intersection of any $t$ distinct members of $\mathcal{F}$ belongs to $L$. This problem has been studied extensively.
For example, F\"uredi and Sudakov~\cite{FUREDI2004143} proved the following theorem.
\begin{theorem}[F\"uredi-Sudakov~\cite{FUREDI2004143}]\label{fu1} Let $L$ be a set of nonnegative integers of size $s$, and let $t\ge 3$. For sufficiently large $n$, every $t$-wise $L$-intersecting family $\mathcal{F}$ satisfies
\begin{equation*}
    |\mathcal{F}|\le \frac{t+s-1}{s+1}\binom{n}{s}+\sum_{i=0}^{s-1}\binom{n}{i}.
\end{equation*}
\end{theorem}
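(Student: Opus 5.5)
The approach is the delta-system method combined with double counting over $s$-subsets of $[n]$. The bound has two parts. The term $\sum_{i=0}^{s-1}\binom{n}{i}$ absorbs all members of size less than $s$. The main term $\frac{t+s-1}{s+1}\binom{n}{s}$ is consistent with the natural construction: all $s$-sets not covered by some family $\mathcal{A}$ of $(s+1)$-sets in which every $s$-set lies in at most $t-1$ members, together with $\mathcal{A}$ itself. So I will show that every member of size at least $s$ can be charged to $s$-subsets of $[n]$ so that the total charge on each $s$-set is at most $\frac{t+s-1}{s+1}$, up to an error of lower order.

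\textbf{Step 1 (reduction).} Write $L=\{\ell_1<\dots<\ell_s\}$. Members of size less than $s$ contribute at most $\sum_{i<s}\binom{n}{i}$. Call a set $\ell_s$ with $\ell_s\ge$ anything "small structure": members of size at least $s$ that are "large", say of size greater than some constant $k_0$, or that are exceptional in the sense below, number only $O(n^{s-1})$. This follows from Füredi's intersection-structure theorem: there is $c=c(k_0,t)$ such that some subfamily $\mathcal{F}^*\subseteq\mathcal{F}$ with $|\mathcal{F}^*|\ge c|\mathcal{F}|$ is homogeneous. In a homogeneous family, every $F\in\mathcal{F}^*$ carries the same intersection pattern $\mathcal{J}\subseteq 2^{F}$, closed under intersection, and every $J\in\mathcal{J}$ is the kernel of a large sunflower in $\mathcal{F}^*$. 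Taking $t$ petals of such a sunflower shows that every $J\in\mathcal{J}$ has $|J|\in L$. Hence the rank of $\mathcal{J}$ is at most $s$.

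\textbf{Step 2 (own subsets).} For $F\in\mathcal{F}$ with $|F|\ge s$, call an $s$-subset $S\subseteq F$ \emph{private} if it lies in no other member. Using Step 1 on the part of $\mathcal{F}$ where every member has rank-$s$ pattern $\mathcal{J}$, with only $O(n^{s-1})$ exceptions, we obtain the following. Every member $F$ of size exactly $s$ is its own private $s$-set. Every member of size at least $s+1$ has at least $s+1$ subsets of size $s$ that are not in $\mathcal{J}$. Each such $s$-set $S$ can lie in at most $t-1$ members: otherwise $t$ members containing $S$ would have intersection of size at least $s$. Their common intersection would then contain a kernel of size greater than $\ell_s$, or of size at least $s$ that is not in $L$, which is a contradiction once we note that intersections of $t$ members must have size in $L$ and hence at most $\ell_s$ up to the structure. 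The precise argument needs care in the case $\ell_s\ge s$.

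\textbf{Step 3 (charging).} Give each member of size $s$ weight $1$ on itself. Give each member of size at least $s+1$ weight $\frac{1}{s+1}$ on each of $s+1$ chosen non-kernel $s$-subsets. An $s$-set that is itself a member cannot also be covered $t$ times. Combining these facts, each $s$-set receives total weight at most $\max\bigl(1+\frac{t-1}{s+1},\ \frac{t-1}{s+1}\bigr)=\frac{t+s-1}{s+1}$. Summing over all $s$-sets gives the main term.

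\textbf{Main obstacle.} The main obstacle is Step 2 in full generality. $L$ is an arbitrary set of $s$ nonnegative integers, and members may have size much larger than $s$, so the argument that an $s$-set covered $t$ times forces a forbidden intersection size is not immediate. I would first try to show, via the homogeneous structure, that the minimal non-kernel subsets of each member have size at most $s$. I would then choose the charged $s$-sets as extensions of these minimal non-kernel sets. The bounded number of pattern types, depending only on $k_0$ and $t$, is what makes the $O(n^{s-1})$ error uniform, and this is what permits the additive lower-order sum for large $n$.
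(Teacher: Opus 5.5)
The paper does not prove this statement; it quotes it from F\"uredi--Sudakov. So your argument has to stand on its own, and as written it has genuine gaps.

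\textbf{Step 1.} The central claim is false. Members of size larger than a constant $k_0$ need not number $O(n^{s-1})$.
For $s=1$ and $L=\{0\}$, a family of $\lfloor n/(k_0+1)\rfloor$ pairwise disjoint $(k_0+1)$-sets is $t$-wise $\{0\}$-intersecting and has $\Theta(n)$ members, not $O(1)$.
For $L=\{0,\dots,s-1\}$, a maximal family of $(k_0+1)$-sets covering each $s$-set at most once is $t$-wise $L$-intersecting with $\Theta(n^s)$ members.
So large members cannot be discarded; they must be charged.

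Your use of Theorem~\ref{furedi stru} also does not deliver what you need. That theorem concerns uniform families and produces a single subfamily $\mathcal{F}^*$ with $|\mathcal{F}^*|\ge c|\mathcal{F}|$. It says nothing about the remaining members, so ``homogeneous of rank $s$ with only $O(n^{s-1})$ exceptions'' does not follow. To get an all-but-few statement you would have to apply it to the bad members themselves, with badness defined relative to all of $\mathcal{F}$.
Moreover, the pattern $\mathcal{J}=\mathcal{I}(F)$ records only intersections with members of $\mathcal{F}^*$. An $s$-subset of $F$ avoided by $\mathcal{J}$ can still lie in members of $\mathcal{F}\setminus\mathcal{F}^*$, so local patterns cannot control the global charge on an $s$-set.
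Used this way, the delta-system method loses a constant factor $c^{-1}$, exactly as in Proposition~\ref{linear pro}. That is far from the constant $\frac{t+s-1}{s+1}$.

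\textbf{Steps 2 and 3.} There are three further problems.

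First, the arithmetic in Step~3 is off: $1+\frac{t-1}{s+1}=\frac{t+s}{s+1}$, not $\frac{t+s-1}{s+1}$. The stated constant needs the fact that an $s$-set which is itself a member lies in at most $t-2$ other members. This holds because $s\notin L$, and it need not hold otherwise. Correspondingly, for $L=\{0,\dots,s-1\}$ the bound is attained, when a suitable design exists, by all sets of size at most $s$ together with $(s+1)$-sets covering every $s$-set exactly $t-2$ times. Your construction yields only $\max\{1,\frac{t-1}{s+1}\}\binom{n}{s}$ members of size at least $s$.

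Second, in Step~2 ``not in $\mathcal{J}$'' must be ``not contained in any member of $\mathcal{J}$''. Even then, a member of size $s+1$ can have only one such $s$-subset. Take $L=\{1,2\}$ and let $\mathcal{F}$ be all triples through a fixed point $x$. This family is homogeneous, with $\mathcal{J}=\{\{x\},\{x,a\},\{x,b\}\}$ for $F=\{x,a,b\}$, and only $\{a,b\}$ is uncovered.
When $\max L\ge s$, nothing bounds the multiplicity of an $s$-set by $t-1$: every $s$-subset of a sunflower core of size $\ell_s$ lies in all petals. When $\ell_1>0$, the common intersection $\bigcap\mathcal{F}$ must be handled. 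Your proposal does neither.

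Third, even a repaired version with an $O(n^{s-1})$ error gives only $\frac{t+s-1}{s+1}\binom{n}{s}+O(n^{s-1})$. That does not imply the exact additive term $\sum_{i<s}\binom{n}{i}$, because taking $n$ large cannot absorb an unspecified constant in front of $n^{s-1}$.

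\textbf{What actually remains.} The corrected charging, using $t-2$, already proves the exact bound for $L=\{0,\dots,s-1\}$ for every $n$, with no structure theory. The real content of the theorem lies in all other $L$. There one needs a genuinely structural argument that produces the sharp leading constant together with the exact lower-order term, and that argument is what your proposal lacks.
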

Kang, Liu, and Wang~\cite{KANG2011464} gave a nonuniform version of the Deza-Erd\H{o}s-Frankl-type theorem.

\begin{theorem}[Kang-Liu-Wang~\cite{KANG2011464}]\label{thm: t L}
Let $t\geq 3$ and let $L=\{\ell_1,\ell_2,\ldots,\ell_s\}$ be a set of $s$ nonnegative integers with $\ell_1<\ell_2<\cdots<\ell_s$. Let $\mathcal{F}$ be a $t$-wise $L$-intersecting family of subsets of $[n]$. If $|\bigcap_{F\in\mathcal{F}}F|<\ell_1$, then $|\mathcal{F}|=o(n^s)$. If $|\bigcap_{F\in\mathcal{F}}F|\geq\ell_1$ and $n$ is sufficiently large, then $|\mathcal{F}|\leq \frac{t+s-1}{s+1}\binom{n-\ell_1}{s}+\sum_{i=0}^{s-1}\binom{n-\ell_1}{i}$.
\end{theorem}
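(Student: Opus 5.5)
The statement is the Kang--Liu--Wang theorem. I would prove it by splitting according to the size of the core $C=\bigcap_{F\in\mathcal{F}}F$, reducing the second case to Theorem~\ref{fu1} and handling the first case with a $\Delta$-system (kernel) argument.

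\textbf{Case $|C|\ge \ell_1$.} Every intersection of $t$ distinct members of $\mathcal{F}$ contains $C$. Hence only the values $\ell\in L$ with $\ell\ge c:=|C|$ can occur. Put $\mathcal{F}'=\{F\setminus C: F\in\mathcal{F}\}$, a family on the ground set $[n]\setminus C$ of size $n-c$. Since every member contains $C$, the map $F\mapsto F\setminus C$ is injective. Moreover, $\mathcal{F}'$ is $t$-wise $L'$-intersecting with $L'=\{\ell-c:\ell\in L,\ \ell\ge c\}$, and $|L'|=s'\le s$. Applying Theorem~\ref{fu1} gives
\[
|\mathcal{F}|\le \frac{t+s'-1}{s'+1}\binom{n-c}{s'}+\sum_{i<s'}\binom{n-c}{i}.
\]
If $s'=s$, then $c=\ell_1$ and this is exactly the claimed bound. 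If $s'<s$, the bound is $O(n^{s-1})$, which is below the claimed bound for large $n$. The claimed bound is increasing in the number of ground-set points, so the case $c>\ell_1$ is dominated as well.

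\textbf{Case $|C|<\ell_1$.} Here I must show $|\mathcal{F}|=o(n^s)$. The plan has four steps.
\begin{enumerate}
\item Members of size at most a constant $k_0$ (depending on $t,L$) contribute $O(1)$ per element of $L$-structure, and only $\ell$ values matter. I reduce to bounded set sizes: if many sets are huge, the $t$-wise condition forces, by pigeonhole on $(t-1)$-tuples, that $t-1$ fixed huge sets have an intersection $B$ of bounded size (at most $\ell_s$). Then every further member $F$ satisfies $|F\cap B|\in L$, which confines $F\cap B$ to $2^{|B|}$ patterns.
\item For bounded-size sets, I apply the Frankl--F\"uredi/F\"uredi intersection-structure theorem. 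It gives a subfamily $\mathcal{F}^*$ with $|\mathcal{F}^*|\ge c(t,L)|\mathcal{F}|$ in which each member has a family of kernels closed under intersection, and every intersection of $t$ members is a kernel.
\item The standard rank argument shows that the number of members whose kernel structure has rank $s$ is $O(n^s)$ only with a full ``$\ell_1$-rooted'' structure. Specifically, attaining $\Theta(n^s)$ members requires that, for almost all of them, the minimal kernel is a fixed $\ell_1$-set $K_0$.
\item A positive fraction of $\mathcal{F}$ then contains $K_0$. Since $|C|<\ell_1$, some member $G$ misses a point of $K_0$. Taking $t-1$ members through $K_0$ in general position together with $G$ produces a $t$-wise intersection of size less than $\ell_1$, a contradiction. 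Making this quantitative gives $|\mathcal{F}|\le \varepsilon n^s$ for every $\varepsilon>0$ once $n$ is large. I would use a removal-type/supersaturation step: $\Omega(n^s)$ sets force many $t$-tuples in sunflower position with kernel $K_0$.
\end{enumerate}

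\textbf{Main obstacle.} The hardest part is the nonuniformity in the second case. Theorem~\ref{fu1}-type kernel arguments require bounded set sizes. Showing that sets of unbounded size either are few or can be absorbed without losing the $o(n^s)$ conclusion is where I expect most of the work. The natural first attempt is to fix $t-1$ large members, use their bounded common intersection $B$, and classify all other members by their trace on $B$. This reduces to at most $2^{|B|}$ subfamilies, each amenable to induction on $s$.
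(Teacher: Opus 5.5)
The paper quotes this result from Kang--Liu--Wang without proof, so I am judging your argument on its own.

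\textbf{What works.} Your case $|\bigcap_{F\in\mathcal F}F|\ge\ell_1$ is fine. Deleting the core gives a $t$-wise $L'$-intersecting family on $n-|C|$ points, and only finitely many $L'$ can occur. Theorem~\ref{fu1} then gives exactly the claimed bound when $|C|=\ell_1$, and an $O(n^{s-1})$ bound when $|C|>\ell_1$.

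For members of \emph{bounded} size in the other case, your Steps 2--4 can also be made rigorous, and more simply than you suggest.
\begin{enumerate}
\item Apply Theorem~\ref{furedi stru} to each $k$-uniform layer with $p\ge\max\{k+1,t\}$. That theorem controls pairwise intersections, not $t$-fold ones; property (iii) with $p\ge t$ makes $\mathcal F^*$ pairwise $L$-intersecting.
\item If $|\mathcal F^*|>\sum_{j<s}\binom nj$, the rank argument forces $M_F=\bigcap\mathcal I(F)$ to have size $\ell_1$.
\item Take a petal of the $p$-sunflower with core $M_F$ that misses $F'\setminus M_F$. It gives $M_F\cap F'\in\mathcal I(F')$, so $M_{F'}\subseteq M_F$ and hence $M_{F'}=M_F=:K_0$ for all members.
\item For any $G\in\mathcal F$ with $K_0\not\subseteq G$, any $t-1$ petals together with $G$ intersect exactly in $G\cap K_0$. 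This set has size less than $\ell_1$, a contradiction.
\end{enumerate}
No supersaturation or removal step is needed. Each layer therefore has at most $c(p,k)^{-1}\sum_{j<s}\binom nj$ members.

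\textbf{The gap.} The genuine gap is the members of unbounded size, which is the whole nonuniform content of the $o(n^s)$ statement. Since $c(p,k)$ depends on $k$ and tends to $0$, the layer bounds cannot be summed over all sizes. You need an argument that the members of size greater than some constant are $o(n^s)$ in total, and your Step 1 does not supply one, for two reasons.

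First, its key assertion is false. The hypothesis controls only intersections of $t$ members, and $t-1$ large members need not have an intersection of size at most $\ell_s$. Take $t=3$, $L=\{1\}$, let the ground set be the $q^3+q$ circles of an inversive plane of order $q$, and let $F_x$ be the set of circles through the point $x$. This gives $q^2+1$ members with empty total intersection, and every three of them share exactly one point. Yet every member has size $q(q+1)$, and every two members share $q+1>\ell_s$ points. So the whole family consists of ``huge'' sets, the bounded-size machinery says nothing about it, and no $t-1$ of its members have small intersection.

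Second, even granting a bounded $B$, splitting by trace on $B$ does not lower $s$. A class whose trace $T$ has $|T|=\ell_1$ still realises all of $L$, and its members have a common intersection containing $T$. So the hypothesis $|\bigcap\mathcal F|<\ell_1$ is not inherited by that class. The only bound available for it is your $|C|\ge\ell_1$ bound, which is of order $n^s$, so the proposed induction on $s$ has nothing to run on.

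Bounding the large members needs a genuinely different idea that uses $t\ge 3$ together with the global condition on $\bigcap\mathcal F$, and the proposal does not contain one.
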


We now extend this notion to clique families.
A graph $G$ is called $t$-wise $(K_r,L)$-intersecting if its associated $r$-graph $\mh_{G}^{r}$ is $t$-wise $L$-intersecting.
Define
\begin{equation*}
    \Psi_{r}(n,L,t)=
    \max_{\substack{G\text{ is an }n\text{-vertex}\\t\text{-wise }(K_r,L)\text{-intersecting graph}}}
    N(K_r,G).
\end{equation*}
In particular, $\Psi_{r}(n,L)=\Psi_{r}(n,L,2)$. Our main results extend Theorem~\ref{thm: zhao} to the $t$-wise family.
The case in which this sequence does not form an arithmetic progression is treated in Theorem \ref{thm 3}, and the arithmetic progression case is treated in Theorem \ref{thm 4}.
\begin{theorem}\label{thm 3}
Let $t,r\geq 3$ be integers and  $L=\{\ell_1,\ell_2,\ldots,\ell_s\}$ with $0\leq\ell_1<\ell_2<\cdots<\ell_s\leq r-1$ and $s\notin\{1,r\}$. If $\ell_1,\ell_2,\ldots,\ell_s,r$ do not form an arithmetic progression, then
    \begin{equation*}
        \Psi_{r}(n,L,t)=o(n^s).
    \end{equation*}
If, in addition, $r-\ell_s = \ell_s-\ell_{s-1}$, then $\Psi_{r}(n,L,t)=O(n^{s-1})$.
\end{theorem}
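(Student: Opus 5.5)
We will reduce Theorem~\ref{thm 3} to the $t=2$ case, i.e.\ to Theorem~\ref{thm: zhao}(1), by deleting a small number of cliques so that the surviving family of $r$-cliques is pairwise $L$-intersecting, or at least pairwise intersecting inside some $L'$ whose sequence with $r$ is still not an arithmetic progression. Let $G$ be an $n$-vertex $t$-wise $(K_r,L)$-intersecting graph and put $\mathcal{F}=\mh_G^r$.

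\textbf{Step 1: pairs with intersections outside $L$ are rare.} For distinct $A,B\in\mathcal{F}$ with $|A\cap B|=m\notin L$, any $t-2$ further cliques $C_1,\dots,C_{t-2}$ must satisfy $|A\cap B\cap C_1\cap\dots\cap C_{t-2}|\in L$. In particular, if some $(t-2)$-tuple of other cliques all contain $A\cap B$, we need $m\in L$, a contradiction. So $A\cap B$ lies in at most $t-3$ other cliques, i.e.\ the set $S=A\cap B$ is contained in fewer than $t$ members of $\mathcal{F}$. Call a set $S$ \emph{rare} if it is contained in fewer than $t$ members of $\mathcal{F}$. We delete every clique of $\mathcal{F}$ that contains a rare subset $S$ with $|S|\notin L$ and $|S|\le r-1$ and that arises as the intersection of two cliques. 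The intended count is that for each $A$ the number of bad partners is bounded, while all bad pairs together number $O(n^{s-1})$. I would prove the latter as in Füredi's delta-system method: apply Füredi's intersection-structure lemma (the homogeneous subfamily $\mathcal{F}^*\subseteq\mathcal{F}$ with $|\mathcal{F}^*|\ge c(r,t)|\mathcal{F}|$, where every $A\in\mathcal{F}^*$ has a common intersection pattern $\mathcal{J}\subseteq 2^{[r]}$, closed under intersection, each $J\in\mathcal{J}$ being the kernel of a large sunflower in $\mathcal{F}^*$). This deletion changes the count only by a constant factor.

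\textbf{Step 2: pass to the pairwise setting.} In $\mathcal{F}^*$, any pairwise intersection $A\cap B$ is an intersection of members of $\mathcal{J}$ and hence lies in $\mathcal{J}$. It is therefore the kernel of a sunflower with at least $t$ petals. Choosing $t-2$ further petals disjoint outside the kernel gives $|A\cap B|\in L$. Hence $\mathcal{F}^*$ is pairwise $L$-intersecting. Now take the spanning subgraph $G^*$ of $G$ formed by the union of the cliques in $\mathcal{F}^*$. The obstacle is that $G^*$ may contain extra $r$-cliques not in $\mathcal{F}^*$. These are still cliques of $G$, however, so the family $\mh_{G^*}^r\subseteq\mathcal{F}$ is again $t$-wise $L$-intersecting, and its new members can be handled by iterating the argument. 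To avoid the iteration, I would instead reprove Zhao--Zhang's argument directly for families satisfying the conclusion of the Füredi lemma. Their proof works from the rank of $\mathcal{J}$ (needing $s$ independent sets of sizes $\ell_i$ and an analysis of the $K_s$-like structure forced in the associated graph), and this uses only pairwise intersections among cliques together with the sunflower structure, which we have.

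\textbf{Step 3: the bounds.} The standard consequence of the Füredi lemma is $|\mathcal{F}^*|=O(n^{\operatorname{rank}\mathcal{J}})$, so we need rank $s$ to be impossible up to $o(n^s)$ error. Following Zhao--Zhang, rank $s$ forces a chain $J_1\subset\cdots\subset J_s\subset[r]$ with $|J_i|=\ell_i$, with the differences equal, and with $r-\ell_s$ equal to them as well. The last equality comes from examining cliques in $G$ created by edges between petals of the different sunflowers (the clique-specific ingredient). Failure of the arithmetic progression therefore gives rank $<s$ for most of the family, and the removal lemma supplies the $o(n^s)$. Under $r-\ell_s=\ell_s-\ell_{s-1}$ the defect lies earlier in the chain and yields an honest rank drop, giving $O(n^{s-1})$.

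The main obstacle is Step 3's clique-specific argument: showing that edges among petals create forbidden new $r$-cliques when the gaps are unequal. This must now be checked against $t$-wise intersections, using $t-1$ petals of one sunflower together with the new clique. I expect it to go through because sunflowers with many petals let us choose the $t-2$ extra cliques to leave the intersection unchanged.
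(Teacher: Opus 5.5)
Your Steps 1--2 only reach what the paper records as Proposition~\ref{linear pro}: a subfamily $\mathcal{F}^*\subseteq\mh_G^r$ with $|\mathcal{F}^*|\ge c|\mathcal{F}|$ that is pairwise $L$-intersecting. Your Step 1 observation in fact gives this without F\"uredi's theorem. For fixed $A$, each $S\subseteq A$ with $|S|\notin L$ equals $A\cap B$ for at most $t-2$ cliques $B\neq A$. So the ``bad pair'' graph has maximum degree at most $2^r(t-2)$, and a greedy independent set works. The global $O(n^{s-1})$ count of bad pairs you announce is unjustified and also unnecessary.

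But this gives only $\Psi_r(n,L,t)=O(\Phi_r(n,L))=O(n^s)$. As you note, Theorem~\ref{thm: zhao} cannot be applied, because $\mathcal{F}^*$ is not the clique family of a graph. Everything beyond $O(n^s)$ is deferred to an unexecuted ``reproof of Zhao--Zhang for families satisfying the F\"uredi conclusion'', and the sketch you give for it is wrong.

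\emph{The data you keep cannot detect the non-AP condition.} Pairwise $L$-intersection together with the homogeneity and sunflower structure of $\mathcal{F}^*$ is not enough. For $r=3$, $L=\{0,1\}$ (not an AP), a Steiner triple system is such a family with $\Theta(n^2)$ members.

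\emph{Rank does not separate AP from non-AP.} A Ruzsa--Szemer\'edi graph (every edge in exactly one triangle, $n^{2-o(1)}$ triangles) is genuinely $t$-wise $(K_3,\{0,1\})$-intersecting. Its homogeneous subfamily has rank exactly $2=s$, since rank $1$ would allow at most $n$ members. So your claim that rank $s$ forces $\ell_1,\ldots,\ell_s,r$ to be an arithmetic progression is false. An integer rank only yields $O(n^{\rho})$ bounds and can never produce $o(n^s)$.

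\emph{The removal-lemma step is empty as written.} You never say to which hypergraph the removal lemma is applied, or why the forbidden configurations are sparse.

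The missing idea is the paper's localization, which restores the \emph{full} clique structure after passing to $\mathcal{F}^*$. In Lemma~\ref{key key lem}, either $\mathcal{F}^*$ is $(L\setminus\{\ell_i\})$-intersecting, or homogeneity (property (ii)) guarantees that every $F\in\mathcal{F}^*$ contains some core $A\in\mathcal{I}(F)$ with $|A|=\ell_i$. By property (vi), these cores form an $\{\ell_1,\ldots,\ell_{i-1}\}$-intersecting $\ell_i$-graph, so there are at most $\Phi_{\ell_i}(n,\{\ell_1,\ldots,\ell_{i-1}\})$ of them. The members through a fixed core $A$ number at most $N(K_{r-\ell_i},G[N_A])$. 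Here $G[N_A]$ is a \emph{graph} whose entire family of $(r-\ell_i)$-cliques is $t$-wise $\{0,\ell_{i+1}-\ell_i,\ldots,\ell_s-\ell_i\}$-intersecting. This lets one recurse on $\Psi$ rather than on set systems.

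The recursion bottoms out in Lemma~\ref{lem 1} ($L=\{0,\ell\}$, $r\neq 2\ell$). There the pruned graph is partitioned into the $\ell$-sets of $\mathcal{C}(G_k)$, which reduces the problem to a $t$-wise $(K_{r/\ell},\{0,1\})$-intersecting block graph. Only at this point is the removal lemma applied (Lemma~\ref{lem: [0,s]}). It uses that each edge of the block graph lies in at most $t-1$ of its $(r/\ell)$-cliques, which is a property of the whole graph, not of a subfamily. The $O(n^{s-1})$ statement comes from combining the same recursion with Zhao--Zhang's $\binom{n}{2}$ bound (Lemma~\ref{t count}). Your proposal contains neither this reduction nor a substitute for it, so it proves neither the $o(n^s)$ nor the $O(n^{s-1})$ bound.
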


\begin{theorem}\label{thm 4}
    Let $t,r\geq 3$ be integers and  $L=\{\ell_1,\ell_2,\ldots,\ell_s\}$ with $0\leq\ell_1<\ell_2<\cdots<\ell_s\leq r-1$ and $s\notin\{1,r\}$. If $\ell_1,\ell_2,\ldots,\ell_s,r$ form an arithmetic progression, then
    \begin{equation*}
        \Psi_{r}(n,L,t)=(1+o(1))\left(\frac{n-\ell_1}{r-\ell_1}\right)^{s}.
    \end{equation*}
\end{theorem}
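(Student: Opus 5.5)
Theorem~\ref{thm 4} needs a lower-bound construction and a matching upper bound; the upper bound is where the work is.

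\textbf{Lower bound.} Let $d$ be the common difference, so $\ell_i=\ell_1+(i-1)d$ and $r=\ell_1+sd$. Build $G$ as follows:
\begin{itemize}
\item take a core $K_{\ell_1}$;
\item take the Tur\'an graph $T(m,s)$ with $m=\lfloor (n-\ell_1)/d\rfloor$;
\item blow up each vertex of $T(m,s)$ into a clique $K_d$, join blown-up cliques exactly when the original vertices are adjacent, and join the core to everything.
\end{itemize}
Every $K_r$ of $G$ consists of the core together with the blow-ups of the $s$ vertices of some $K_s$ in $T(m,s)$, so the number of $K_r$ equals $N(K_s,T(m,s))=(1+o(1))(m/s)^s=(1+o(1))\bigl(\frac{n-\ell_1}{r-\ell_1}\bigr)^s$. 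Any $t$ distinct such cliques meet in the core plus $d\cdot j$ vertices, where $j$ is the number of parts on which all $t$ choices agree. Since the cliques are distinct, $j\le s-1$, so the intersection size lies in $L$. I would check this construction first, since it also suggests the structure the upper bound should force.

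\textbf{Upper bound.} I would reduce to the $2$-wise case via Theorem~\ref{thm: zhao}. The obstacle is that $t$-wise $L$-intersecting does not imply pairwise $L$-intersecting, because two cliques may share more than $\ell_s$ vertices. I would proceed in four steps.

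\begin{enumerate}
\item Call a pair of $K_r$'s \emph{bad} if $|e_1\cap e_2|\notin L$. Bound the bad configurations using a sunflower or delta-system argument in the style of Füredi--Sudakov and Kang--Liu--Wang, i.e.\ the arguments behind Theorems~\ref{fu1} and~\ref{thm: t L}. For a fixed set $S$ with $|S|\notin L$, the number of cliques containing $S$ should be $O(1)$ or otherwise small. The reason is that if many cliques contained $S$, one could choose $t$ of them, all containing $S$, whose common intersection is exactly $S$ (sunflower with kernel $S$), which contradicts $t$-wise $L$-intersection.
\item Deleting a small number of cliques, together with those lying in structures with non-$L$ kernels, should leave a subfamily $\mathcal{H}'$ that is pairwise $L$-intersecting and still has $(1-o(1))|\mathcal{H}^r_G|$ members. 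One way to do this is to use the Füredi intersection-semilattice lemma: pass to a $K$-partite subfamily of size $\ge c|\mathcal{H}|$ in which all intersection patterns are homogeneous and every intersection is the kernel of a large sunflower. In such a family, every pairwise intersection is also a $t$-wise intersection, hence lies in $L$.
\item The difficulty in Step~2 is that the homogeneous subfamily loses a constant factor, whereas Theorem~\ref{thm 4} needs the sharp constant. So I would instead apply the Füredi-type argument only to show that the number of cliques not covered by a good structure is $O(n^{s-1})$. Every clique has a ``private'' rank, its own subset $s$-set, much as in the proof of Theorem~\ref{thm: t L}. The cliques that pairwise intersect outside $L$ should then be absorbed into lower-order terms.
\item Apply Theorem~\ref{thm: zhao}(2) to the graph $G'$ whose $K_r$'s are $\mathcal{H}'$. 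This does not work directly, since $G'$ may have extra cliques. Instead I would rerun Zhao--Zhang's structural argument on $\mathcal{H}'$, or more simply work at the level of the associated graph: delete one edge from each of the few bad cliques, so that the graph becomes $(K_r,L)$-intersecting in the $2$-wise sense while losing only $o(n^s)$ cliques.
\end{enumerate}

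\textbf{Main obstacle.} The crux is to show that the number of cliques involved in pairwise intersections outside $L$ (i.e.\ of size $>\ell_s$, or strictly between elements of $L$) is $o(n^s)$, and that deleting edges to kill them does not create or destroy too much. Granting this, Theorem~\ref{thm: zhao}(2) gives
\[
|\mathcal{H}^r_G|\le o(n^s)+(1+o(1))\Bigl(\frac{n-\ell_1}{r-\ell_1}\Bigr)^s,
\]
which matches the construction.
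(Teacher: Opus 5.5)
Your lower-bound construction is correct and is the same as the paper's. The upper bound has a genuine gap. You explicitly grant the statement you call the main obstacle: that only $o(n^s)$ cliques lie in pairs whose intersection size is outside $L$, and that these can be removed at cost $o(n^s)$. None of Steps 1--4 establishes it.

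\emph{Step 1.} The justification is false as stated. A fixed set $S$ with $|S|\notin L$ can lie in $\Theta(n^{s-1})$ cliques: in your own construction with $d\ge 2$, take the core together with one further vertex. Applying the sunflower lemma to the cliques through $S$ only gives a kernel containing $S$, and that kernel may well have size in $L$. What is true is weaker. If $S=A\cap B$ for a bad pair $A,B$, then at most $t-1$ cliques contain $S$, since adding any $t-2$ of them to $A,B$ gives a $t$-wise intersection equal to $S$. But this bounds the number of bad cliques only by $(t-1)$ times the number of such sets $S$, which is useless once $|S|\ge s$.

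\emph{Steps 3 and 4.} Step 3 contains no argument. The edge deletion in Step 4 is not controlled: a single edge inside a $d$-blob lies in $\Theta(n^{s-1})$ cliques, so deleting one edge per bad clique without care need not cost only $o(n^s)$.

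\emph{What is missing} is the structural input, which the paper proves directly.

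For $d=1$ your obstacle is in fact vacuous. Since $N(K_r,G)=\Omega(n^s)$, Theorem~\ref{thm: t L} gives a common core of size $\ge \ell_1$. Then any two cliques meet in $[\ell_1,r-1]=L$, so Theorem~\ref{thm: zhao}(2) applies at once.

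For $d\ge 2$, proceed as follows.
\begin{itemize}
\item Strip the common $K_{\ell_1}$ to reduce to $L_{d,s}=\{0,d,\ldots,(s-1)d\}$ with $r=sd$.
\item Define an \emph{atom} as a maximal set of size $\ge d$ that every $K_r$ either contains or misses. Atoms are pairwise disjoint.
\item Let $X_1$ be the union of the atoms of size exactly $d$, and let $X_0=V(G)\setminus X_1$.
\end{itemize}
The key lemma is that each $x\in X_0$ lies in only $o(n^{s-1})$ cliques. Let $I$ be the common intersection of the link of $x$.
\begin{itemize}
\item If $|I|\ne d-1$, Theorem~\ref{thm: t L} applies to the link directly.
\item If $|I|=d-1$, then $Q=I\cup\{x\}$ cannot have the contain-or-miss property. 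Any atom containing $Q$ lies inside $I\cup\{x\}$, so $Q$ would be a $d$-atom, contradicting $x\in X_0$. Hence some clique $D$ meets $Q$ without containing it.
\item Then $t$-wise intersection forces every clique through $x$ to meet $D\setminus I$. Fixing such a second vertex $y$ gives $O(n^{s-2})$ cliques through $x$ and $y$, for each of at most $r$ choices of $y$.
\end{itemize}

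With this lemma your reduction does go through, but by deleting the vertex set $X_0$ rather than edges, at cost $o(n^s)$. Every clique of $G[X_1]$ is a union of $s$ atoms, so two distinct ones meet in $jd$ vertices with $j\le s-1$. Thus $G[X_1]$ is $2$-wise $(K_{sd},L_{d,s})$-intersecting, and Theorem~\ref{thm: zhao}(2) finishes. The paper avoids Theorem~\ref{thm: zhao} at this point. It counts these cliques as $K_s$'s in the $K_{s;t}$-free auxiliary graph on atoms and applies Theorem~\ref{alon pro}.
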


Thus our results recover, for every fixed $t\ge 3$, the same asymptotic behavior as in the case $t=2$ in Theorem~\ref{thm: zhao}.

This article is organized as follows. In Section \ref{sec: preliminaries}, we present some preliminary results. In Section \ref{sec: proof non-AP}, we prove Theorem \ref{thm 3}. In Section \ref{sec: proof AP}, we prove Theorem \ref{thm 4}. Basically,  the main idea of our proofs   comes from  Zhao and Zhang \cite{zhao2025countingcliquesprescribedintersection}, but we make  changes to generalize the results to $t$-wsie case.


\section{Preliminaries}\label{sec: preliminaries}
For graphs $G_1,G_2$, let  $G_1 \cup G_2$ be the union of vertex-disjoint copy of $G_1,G_2$, and let $G_1 + G_2$ be the join of $G_1$ and $G_2$, obtained from $G_1 \cup G_2$ by adding all edges between $G_1$ and $G_2$.
For an integer $m$, let $mG$ denote  the union of $m$ vertex-disjoint copies of $G$.
Given an $r$-graph $\mathcal{H}$ and a vertex $u$, the \emph{link hypergraph} of $u$, denoted by $\mh_{G}^{r}(u)$, is the $(r-1)$-graph whose hyperedge set is $\{S: S\cup \{u\}\in \mathcal{H}\}$.  Let $G$ be a graph and $\mh_{G}^{r}=\{S\in\binom{V(G)}{r}: G[S]\cong K_r\}$ its associated graph. We use  $|\mh_{G}^{r}|$ to represent the order  of $\mh_{G}^{r}$.
For any $v\in V(G)$, let  $d_{\mh_{G}^{r}}(v)$ be the number of subsets in $\mh_{G}^{r}$ contained $v$. Then $d_{\mh_{G}^{r}}(v)=|\mh_{G}^{r}(v)|$.

We say that a collection of sets $A_1,A_2,\ldots,A_p \in \binom{[n]}{r}$ is a \emph{sunflower} with core $C$ if $A_i\cap A_j=C$ for all distinct $i,j\in [p]$ and the set $A_i$
is called the \emph{petal} of the sunflower, $1\le i\le p$.
The core $C$ may be empty, in which case the petals are pairwise disjoint.
The classical sunflower lemma~\ref{lemma: sunflower} provides a sufficient condition for the existence of a sunflower.
\begin{lemma}[Erd\H{o}s-Rado~\cite{https://doi.org/10.1112/jlms/s1-35.1.85}]\label{lemma: sunflower}
    If $\mathcal{J}$ is a collection of subsets, each of size $s$, and $|\mathcal{J}|>s!(k-1)^{s}$, then it contains a sunflower with $k$ petals.
\end{lemma}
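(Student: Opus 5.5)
The plan is to prove the sunflower lemma by induction on the common set size $s$, with $k\ge 2$ fixed (for $k=1$ any single set is a sunflower).

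\emph{Base case $s=1$.} The hypothesis gives $|\mathcal{J}|>k-1$, so $\mathcal{J}$ contains at least $k$ distinct singletons. Distinct singletons are pairwise disjoint, so any $k$ of them form a sunflower with empty core.

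\emph{Inductive step.} Let $s\ge 2$ and assume the statement for $s-1$. Choose a maximal subfamily $\mathcal{M}\subseteq\mathcal{J}$ of pairwise disjoint sets.
\begin{itemize}
\item If $|\mathcal{M}|\ge k$, any $k$ members of $\mathcal{M}$ form a sunflower with empty core, and we are done.
\item Otherwise put $U=\bigcup_{M\in\mathcal{M}}M$, so $|U|\le s(k-1)$. By maximality of $\mathcal{M}$, every set of $\mathcal{J}$ meets $U$.
\end{itemize}
In the second case, pigeonhole gives an element $x\in U$ contained in more than
\[
\frac{|\mathcal{J}|}{s(k-1)}>\frac{s!(k-1)^s}{s(k-1)}=(s-1)!(k-1)^{s-1}
\]
members of $\mathcal{J}$. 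Form $\mathcal{J}_x=\{A\setminus\{x\}: x\in A\in\mathcal{J}\}$. This is a family of $(s-1)$-sets. Its members are pairwise distinct, because the original sets all contain $x$ and are distinct. Hence $|\mathcal{J}_x|>(s-1)!(k-1)^{s-1}$.

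By the induction hypothesis, $\mathcal{J}_x$ contains a sunflower $B_1,\dots,B_k$ with some core $C$. The sets $B_i\cup\{x\}$ belong to $\mathcal{J}$ and satisfy $(B_i\cup\{x\})\cap(B_j\cup\{x\})=C\cup\{x\}$ for all $i\ne j$. So they form a sunflower with $k$ petals and core $C\cup\{x\}$, which completes the induction.

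The only step needing care is the averaging step. It requires the strict inequality to survive division by $|U|$, which holds because $|U|\le s(k-1)$. It also requires the sets in $\mathcal{J}_x$ to remain distinct, so that the count genuinely transfers to the $(s-1)$-uniform family. Beyond that, the argument is routine.
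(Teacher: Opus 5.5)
Your proof is correct: it is the classical Erd\H{o}s--Rado induction (take a maximal pairwise disjoint subfamily, apply pigeonhole on its union, then induct on the link of a frequently occurring element), which is the standard argument for the result that the paper cites without proof. One cosmetic point: pigeonhole alone gives an $x$ lying in at least $|\mathcal{J}|/|U|\ge |\mathcal{J}|/(s(k-1))$ members rather than ``more than'' that, but the strict inequality $>(s-1)!(k-1)^{s-1}$ you actually need already follows from the hypothesis, so the conclusion is unaffected.
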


For the set of consecutive integers from zero to $s$, we use the following result.
\begin{lemma}\label{lem: [0,s]}
For integers $r,t\geq 3$ and $1\leq s\leq r-2$, we have
\begin{equation*}
    \Psi_{r}(n,[0,s],t)=o(n^{s+1}).
\end{equation*}
\end{lemma}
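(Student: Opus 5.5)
The plan is to reduce the $t$-wise statement to the pairwise case $t=2$, which is already covered by Theorem~\ref{thm: zhao}, by extracting a large subfamily of cliques that is $2$-wise $[0,s]$-intersecting.

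Let $G$ be an $n$-vertex $t$-wise $(K_r,[0,s],t)$-intersecting graph and write $\mathcal{H}=\mathcal{H}_G^r$.

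\emph{Step 1: each $(s+1)$-set lies in few cliques.} If some $(s+1)$-set $T\subseteq V(G)$ were contained in $t$ distinct members of $\mathcal{H}$, then these $t$ cliques would have common intersection of size at least $s+1$. This is not in $[0,s]$, a contradiction. Hence every $(s+1)$-subset of $V(G)$ lies in at most $t-1$ members of $\mathcal{H}$.

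\emph{Step 2: conflict graph.} Define an auxiliary graph $\Gamma$ on vertex set $\mathcal{H}$. Two cliques $K,K'\in\mathcal{H}$ are adjacent in $\Gamma$ when $|K\cap K'|\ge s+1$, i.e.\ when they share some $(s+1)$-set. Fix $K$. It has $\binom{r}{s+1}$ subsets of size $s+1$, and by Step~1 each such subset lies in at most $t-2$ cliques other than $K$. Therefore
\[
\Delta(\Gamma)\le \binom{r}{s+1}(t-2).
\]
A greedy choice gives an independent set $\mathcal{H}'\subseteq\mathcal{H}$ of $\Gamma$ with
\[
|\mathcal{H}'|\ge \frac{|\mathcal{H}|}{1+\binom{r}{s+1}(t-2)}.
\]
By construction, any two distinct members of $\mathcal{H}'$ meet in at most $s$ vertices.

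\emph{Step 3: realize $\mathcal{H}'$ as the clique family of a graph.} Let $G'$ be the spanning subgraph of $G$ whose edge set is the union of the edge sets of the cliques in $\mathcal{H}'$. Then $\mathcal{H}_{G'}^r\supseteq\mathcal{H}'$, but $G'$ could in principle contain extra $r$-cliques. We handle this by observing that $\mathcal{H}_{G'}^r\subseteq\mathcal{H}_G^r=\mathcal{H}$, since $G'\subseteq G$.

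\emph{Step 4: the main obstacle and its resolution.} Applying Theorem~\ref{thm: zhao} needs a graph that is $(K_r,[0,s])$-intersecting, and Step~3 does not yet give this. So instead of $G'$, I apply the argument to the graph $G$ itself and use the $2$-wise structure more carefully. The cleanest route, which I would try first, is to bypass graphs and run the proof of Theorem~\ref{thm: zhao}(1) directly on the family $\mathcal{H}'$. That proof only uses two facts: $\mathcal{H}'$ is a pairwise $[0,s]$-intersecting family of $r$-cliques of one graph, and any $K_r$ of $G$ spanned by edges of members of $\mathcal{H}'$ is still a member of $\mathcal{H}$, hence has bounded overlap multiplicity by Step~1.

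Alternatively, one can iterate Step~2 on $\mathcal{H}_{G'}^r$. Any new clique of $G'$ is an element of $\mathcal{H}$, so the whole family $\mathcal{H}_{G'}^r$ still has $(s+1)$-set multiplicity at most $t-1$. Passing between $G$ and $G'$ loses only constant factors. The remaining step, showing that one can actually reach a genuinely $(K_r,[0,s])$-intersecting graph, is the place I expect to require care.

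\emph{Step 5: conclusion.} Once a $(K_r,[0,s])$-intersecting graph carrying $\Omega(|\mathcal{H}|)$ cliques is obtained, apply Theorem~\ref{thm: zhao}(1) with $L=[0,s]$. The set $L$ has size $s+1\notin\{1,r\}$ because $1\le s\le r-2$. The sequence $0,1,\dots,s,r$ is not an arithmetic progression because $r-s\ge 2\neq 1$. Hence the number of cliques is $o(n^{s+1})$, and therefore
\[
|\mathcal{H}|\le \left(1+\binom{r}{s+1}(t-2)\right)o(n^{s+1})=o(n^{s+1}).
\]
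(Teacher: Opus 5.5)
There is a genuine gap, and it sits exactly where you flag it in Step~4. Step~5 is conditional on having a $(K_r,[0,s])$-intersecting \emph{graph} that carries $\Omega(|\mathcal{H}|)$ cliques, and you never produce one.

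The independent set $\mathcal{H}'$ from Step~2 is only a pairwise $[0,s]$-intersecting family of $r$-sets. Theorem~\ref{thm: zhao} says nothing about such families. The hypergraph quantity $\Phi_r(n,[0,s])$ is of order $n^{s+1}$ (take packings), so the $o(n^{s+1})$ bound genuinely needs the clique structure.

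The graph $G'$ spanned by $\mathcal{H}'$ may contain further members of $\mathcal{H}\setminus\mathcal{H}'$ that meet members of $\mathcal{H}'$ in $s+1$ or more vertices. Destroying them by deleting edges costs members of $\mathcal{H}'$. For $s\ge 2$ a single edge can lie in unboundedly many $r$-cliques, so the simple counting of Step~2 does not bound this loss. Iterating Step~2 also does not obviously terminate in a $(K_r,[0,s])$-intersecting graph.

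Your fallback, ``run the proof of Theorem~\ref{thm: zhao}(1) directly on $\mathcal{H}'$,'' is an assertion about an argument you have not seen, not a proof. This is the same obstruction that makes Proposition~\ref{linear pro} compare $\Psi_r(n,L,t)$ with $\Phi_r(n,L)$ rather than with $\Psi_r(n,L)$.

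The detour through the $2$-wise case is unnecessary, because your Step~1 is already the whole input:
\begin{enumerate}
\item Let $\mathcal{G}$ be the $(s+1)$-graph whose edges are the $(s+1)$-cliques of $G$. Since $s+1\ge 2$, the copies of $\mathcal{K}_r^{s+1}$ in $\mathcal{G}$ are exactly the $r$-cliques of $G$.
\item By Step~1 there are at most $(t-1)\binom{n}{s+1}$ such copies, and this is $o(n^r)$ because $r\ge s+2$.
\item The hypergraph removal lemma then lets you delete $o(n^{s+1})$ edges of $\mathcal{G}$ to destroy all copies of $\mathcal{K}_r^{s+1}$.
\item Every $r$-clique of $G$ contains a deleted $(s+1)$-set, and each such set lies in at most $t-1$ cliques. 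Hence $|\mathcal{H}|\le (t-1)\,o(n^{s+1})=o(n^{s+1})$.
\end{enumerate}
This is precisely the paper's proof.
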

\begin{proof}
    Let $G$ be an $n$-vertex $t$-wise $(K_r,[0,s])$-intersecting graph.
%
%
%
    %
    Construct an $(s+1)$-hypergraph $\mg$ with $V(\mg)=V(G)$ and $E(\mg)=\{X \in\binom{V(\mg)}{s+1}: G[X]\cong K_{s+1}\}$.
    Let $\mathcal{K}_{r}^{s+1}$ be the complete $(s+1)$-graph on $r$ vertices, that is $\mathcal{K}_{r}^{s+1}=\binom{[r]}{s+1}$.
    Each $K_r$ in $G$ corresponds to a copy of $\mathcal{K}_{r}^{s+1}$ in $\mg$, and each hyperedge of $\mg$ is contained in at most $t-1$ distinct copies of $\mathcal{K}_{r}^{s+1}$.
    Hence the number of copies of $\mathcal{K}_{r}^{s+1}$ in $\mg$ is at most $(t-1)\binom{n}{s+1} = \Theta(n^{s+1})$.

    By the hypergraph removal lemma and $s+1<r$, $\mg$ can be made $\mathcal{K}_{r}^{s+1}$-free by removing $o(n^{s+1})$ hyperedges; these hyperedges correspond to $o(n^{s+1})$ copies of $K_{s+1}$ in $G$.
    Every copy of $K_r$ in $G$ contains one of the removed copies of $K_{s+1}$, and each copy of $K_{s+1}$ is contained in at most $t-1$ copies of $K_r$.
    Hence $G$ contains only $o(n^{s+1})$ copies of $K_r$.
    Therefore, $\Psi_{r}(n,[0,s],t)=o(n^{s+1})$.
\end{proof}

Let $G$ be a graph. We use $\chi(G)$ to denote the chromatic number of $G$. The following theorems will be used in the proof.
\begin{theorem}[Alon-Shikhelman \cite{ALON2016146}]\label{alon pro}
    For any graph $H$ and any integer $q\ge 2$,
    \begin{equation*}
        \text{ex}(n,K_q,H)=\Omega(n^q)
        \quad\text{if and only if}\quad
        \chi(H)>q.
    \end{equation*}
    Furthermore, if $\chi(H)=k>q$, then
    \begin{equation*}
        \text{ex}(n,K_q,H)
        =(1+o(1))\binom{k-1}{q}\left(\frac{n}{k-1}\right)^q.
    \end{equation*}
    \end{theorem}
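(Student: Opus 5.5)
The plan is to prove the dichotomy and the asymptotic formula together. Throughout, fix $h=|V(H)|$ and use that a graph with $\chi(H)\le m$ is a subgraph of the blow-up $K_m(h)$, the complete $m$-partite graph with parts of size $h$.

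\emph{Lower bound.} Suppose $\chi(H)=k>q$. The Tur\'an graph $T(n,k-1)$ is $(k-1)$-colourable, so it contains no copy of $H$. A direct count shows it has $(1+o(1))\binom{k-1}{q}\left(\frac{n}{k-1}\right)^q=\Omega(n^q)$ copies of $K_q$. This gives the lower bound in the formula and the ``if'' direction of the equivalence.

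\emph{The case $\chi(H)\le q$.} Here I prove $\text{ex}(n,K_q,H)=o(n^q)$, which is the ``only if'' direction. Let $G$ be $H$-free, and form the $q$-graph $\mathcal{Q}$ on $V(G)$ whose hyperedges are the vertex sets of copies of $K_q$ in $G$. Suppose $\mathcal{Q}$ contains the complete $q$-partite $q$-graph with parts $V_1,\dots,V_q$, each of size $h$. Then any two vertices in different parts lie together in some transversal $q$-set, so they are adjacent in $G$. Hence $G\supseteq K_q(h)\supseteq H$, a contradiction. Erd\H{o}s' theorem on complete $q$-partite $q$-graphs therefore gives $|\mathcal{Q}|=O(n^{q-\varepsilon})$ for some $\varepsilon=\varepsilon(q,h)>0$, which is $o(n^q)$.

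\emph{Upper bound when $\chi(H)=k>q$.} Let $G$ be $H$-free.
\begin{enumerate}
\item First, $G$ has $o(n^k)$ copies of $K_k$. Otherwise $G$ has $\delta n^k$ copies of $K_k$ for some fixed $\delta>0$. By the supersaturation version of Erd\H{o}s' theorem applied to the $k$-graph of these copies, it then contains a complete $k$-partite $k$-graph with parts of size $h$. As in the previous case, this yields $K_k(h)\supseteq H$ in $G$, a contradiction.
\item By the graph removal lemma, deleting $o(n^2)$ edges of $G$ gives a $K_k$-free graph $G'$.
\item By Zykov's theorem (the generalized Tur\'an theorem for cliques), $N(K_q,G')\le N(K_q,T(n,k-1))$.
\item Each deleted edge lies in at most $n^{q-2}$ copies of $K_q$, so
\begin{equation*}
N(K_q,G)\le N(K_q,T(n,k-1))+o(n^2)\,n^{q-2}=(1+o(1))\binom{k-1}{q}\left(\frac{n}{k-1}\right)^q .
\end{equation*}
\end{enumerate}

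The main obstacle is step 1 of the last case: converting ``many copies of $K_k$'' into an actual blow-up $K_k(h)$. The fact that every transversal $k$-set of a complete $k$-partite $k$-graph is a clique is exactly what makes the blow-up appear in $G$, and the same reasoning handles the case $\chi(H)\le q$.
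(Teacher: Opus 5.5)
Your proof is correct. The paper gives no proof of this statement; it quotes it from Alon--Shikhelman and uses it as a black box in the proof of Theorem~\ref{thm 4}. Your argument is essentially the standard proof from that source:
\begin{itemize}
\item the Tur\'an graph $T(n,k-1)$ gives the lower bound;
\item Erd\H{o}s' $o(n^q)$ bound for $K^{(q)}(h,\dots,h)$-free $q$-graphs, applied to the clique hypergraph, gives the case $\chi(H)\le q$;
\item for $\chi(H)=k>q$, the same argument at level $k$ is followed by the graph removal lemma and Zykov's theorem.
\end{itemize}
One small simplification: Step~1 needs no supersaturation, since it is exactly your $\chi(H)\le q$ argument with $q$ replaced by $k$.
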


\section{Proof of Theorem~\ref{thm 3}}\label{sec: proof non-AP}

We begin with the case $s = 2$, proving the theorem first for $L=\{0,\ell\}$ and $r\neq 2\ell$.

\begin{lemma}\label{lem 1}
    Let $t,r\geq 3$ and $1\leq\ell\leq r-1$ be integers. If $r\neq 2\ell$, then
    \begin{equation*}
        \Psi_{r}(n,\{0,\ell\},t)=o(n^2).
    \end{equation*}
\end{lemma}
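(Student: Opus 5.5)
The plan is to reduce to a graph removal lemma argument, as in Lemma~\ref{lem: [0,s]}: show that every copy of $K_r$ in $G$ contains a bounded-multiplicity substructure, and then delete $o(n^2)$ edges. Throughout, let $G$ be an $n$-vertex $t$-wise $(K_r,\{0,\ell\})$-intersecting graph and $\mathcal H=\mh_G^r$.

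\textbf{Multiplicity bound.} If $X\subseteq V(G)$ satisfies $|X|\notin\{0,\ell\}$ and $X$ lies in $t$ distinct members of $\mathcal H$, then their common intersection contains $X$. That intersection has size in $\{0,\ell\}$, so it must equal $\ell$, which forces $|X|<\ell$. Hence every set of size larger than $\ell$ lies in at most $t-1$ cliques.

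\textbf{The case $\ell=1$.} Here every edge lies in at most $t-1$ copies of $K_r$. Then $N(K_r,G)\le (t-1)\binom n2=o(n^r)$. By the graph removal lemma, $o(n^2)$ edges can be deleted to destroy all copies of $K_r$, and each deleted edge destroys at most $t-1$ of them. So $N(K_r,G)=o(n^2)$.

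\textbf{The case $\ell\ge 2$: light edges.} The same scheme should work once we show that every clique $A\in\mathcal H$ contains an edge lying in at most $C=C(r,t)$ cliques; call such an edge \emph{light}. Fix $A$. A clique $B\ne A$ meeting $A$ in more than $\ell$ vertices belongs, together with $A$, to a bounded family, by the multiplicity bound. So all but $O_{r,t}(1)$ cliques meet $A$ in at most $\ell$ vertices.

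An edge $uv\subseteq A$ is heavy only if many cliques $B$ contain both $u$ and $v$. Among any $t-1$ of these $B$'s together with $A$, the common intersection has size $\ell$. Applying the sunflower lemma (Lemma~\ref{lemma: sunflower}) to the traces $B\cap A$, I would extract, for each heavy edge, an $\ell$-subset $S\subseteq A$ containing it, such that $S$ is the common intersection of an unbounded sunflower of cliques through $A$.

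I then want to show that these "kernel" $\ell$-sets cannot cover all $\binom r2$ pairs of $A$ unless $r=2\ell$. Suppose two distinct kernels $S_1,S_2\subseteq A$ meet in a nonempty set. Take $t-2$ petal cliques through $S_1$, one petal clique through $S_2$, and $A$ itself. Their common intersection is $S_1\cap S_2$, whose size lies strictly between $0$ and $\ell$, a contradiction. Thus the kernels are pairwise disjoint $\ell$-subsets of $A$.

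Disjoint kernels cover all pairs of $A$ only if there is a single kernel equal to $A$, which is impossible since $\ell<r$. They also leave every cross pair uncovered, so some edge of $A$ is light. This much already yields a light edge whenever $\ell\ge2$. The condition $r\ne 2\ell$ should be needed later: when $A$ splits exactly into two kernels, there is a genuinely extremal-looking configuration, a join-type structure, and the counting must exploit $r\ne2\ell$ to rule out quadratic growth there.

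\textbf{Counting.} This is the step I expect to be the main obstacle. The problem is that removal-lemma deletions may hit heavy edges, each of which can lie in many cliques. My first attempt is to pass to the subgraph $G'$ formed by light edges only and, for each clique, record a fixed light edge together with a bounded-size substructure of $A$ that is rigid under the disjoint-kernel structure. The aim is to encode $\mathcal H$ injectively, up to $O(1)$ multiplicity, into copies of a fixed graph $F$ in $G'$ that is contained in only $O(n^2)$ copies in total. Then apply the removal lemma to $F$ in $G'$: each deleted light edge kills at most $C$ cliques, giving $o(n^2)$.

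If this encoding fails, the fallback is the Zhao--Zhang approach: pass to the auxiliary graph on kernels and use Theorem~\ref{alon pro} with a graph $H$ of chromatic number at most $2$, built from $r\ne 2\ell$, to get $\text{ex}(n,K_2,H)=o(n^2)$.
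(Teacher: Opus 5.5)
\textbf{There is a genuine gap at the counting step, which you yourself flag, and neither of your two plans closes it.} The parts you carry out are fine. The case $\ell=1$ is exactly Lemma~\ref{lem: [0,s]} with $s=1$. For $\ell\ge 2$, your kernels (pairwise disjoint $\ell$-sets, each contained in or disjoint from every clique) are essentially the sets of $\mathcal{C}(G)$ from Claims~\ref{claim: U=C}--\ref{claim: AC}. One correction: the sunflower lemma must be applied to the link $\{B\setminus\{u,v\}: \{u,v\}\subseteq B\in\mathcal{H}\}$ of a heavy edge, not to the traces $B\cap A$, of which there are at most $2^r$.

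\emph{Light edges alone give no bound.} Removal-lemma deletions in $G$ may hit heavy edges: each edge inside a kernel lies in all the many cliques through that kernel. Moreover, the cliques are not subgraphs of the light-edge graph.

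\emph{The encoding into a fixed $F$ is never specified, and its natural version fails.} A clique made of $r/\ell$ kernels appears among light edges as $K_{\ell,\ldots,\ell}$. But $G$ may also contain, on separate vertices, a complete $(r/\ell)$-partite graph with linear-size parts. It spans no $K_r$, consists of light edges, and contains $\Theta(n^r)$ copies of $K_{\ell,\ldots,\ell}$, so the removal lemma cannot be invoked.

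\emph{The fallback fails outright.} Theorem~\ref{alon pro} with $\chi(H)\le 2$ only bounds the edges of $H$-free graphs. Take a Ruzsa--Szemer\'edi-type graph with $n^{2-o(1)}$ edges, each in exactly one $K_{r/\ell}$, and blow each vertex up into a $K_\ell$. This gives $t$-wise $(K_r,\{0,\ell\})$-intersecting graphs whose kernel graph has $n^{2-o(1)}$ edges. By K\H{o}v\'ari--S\'os--Tur\'an, that kernel graph contains every fixed bipartite $H$. So the removal lemma is unavoidable, but it has to be applied after contracting kernels.

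\textbf{The missing idea is the paper's two-step reduction.}

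First, suppose a vertex $u$ lies in more than $(r-1)!(t-1)^{r-1}$ cliques. The sunflower lemma in the link of $u$ gives $t$ cliques meeting exactly in an $\ell$-set $C\ni u$. Intersecting any other clique through $u$ with $t-1$ of them shows that $C$ lies in every clique through $u$. Hence every vertex outside all kernels lies in $O(1)$ cliques, and the cliques meeting such vertices number only $O(n)$. The paper implements this by iteratively deleting low-degree vertices.

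Second, every remaining clique is a disjoint union of kernels, so $\ell\mid r$. Since also $r\neq 2\ell$ and $\ell<r$, we get $r/\ell\ge 3$. This is the only place the hypothesis enters: under $r\ne 2\ell$ a clique can never consist of exactly two kernels, whereas for $r=2\ell$ such cliques really do give $\Theta(n^2)$. Contract each kernel to a vertex. The remaining cliques then correspond bijectively to the copies of $K_{r/\ell}$ in the contracted graph, and these form a $t$-wise $\{0,1\}$-intersecting family. Your $\ell=1$ removal argument then gives $o(n^2)$, and the light-edge analysis is not needed at all.
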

\begin{proof}
Let $G$ be an $n$-vertex $t$-wise $(K_r,\{0,\ell\})$-intersecting graph, where $r\neq 2\ell$. For any subset $C\subseteq V(G)$ of size $\ell$, let $\mathcal{F}_C\subseteq \mh_{G}^{r}$ be the set of hyperedges containing $C$.
Set
\begin{align*}
        U(G)&=\{u\in V(G):d_{\mh_{G}^{r}}(u)\geq (r-1)!(t-1)^{r-1}+1\},\\
        \mathcal{C}(G)&=\left\{C\in \binom{V(G)}{\ell}:|\mathcal{F}_C|\geq (r-1)!(t-1)^{r-1}+1\right\}.
\end{align*}

We first record several useful properties of $\mathcal{C}(G)$.
\begin{claim}\label{claim: U=C}
    $U(G)=\bigcup_{C\in \mathcal{C}(G)}C$. For any $C\in \mathcal{C}(G)$, $C$ is a core of   a sunflower in $\mh_{G}^{r}$ with $t$ petals.
\end{claim}

 \noindent
    \textbf{Proof of Claim~\ref{claim: U=C} }
  By the definitions, we have $\bigcup_{C\in \mathcal{C}(G)}C\subseteq U(G)$.

  Let $u\in U(G)$.
  Since $\mathcal{H}_G^{r}$ is $t$-wise $\{0,\ell\}$-intersecting, the link hypergraph ${\mh_{G}^{r}}(u)$ of $u$ is a $t$-wise $(\ell-1)$-intersecting $(r-1)$-graph of size at least $(r-1)!(t-1)^{r-1}+1>(r-1)!(t-1)^{r-1}$.
  By Lemma \ref{lemma: sunflower}, ${\mh_{G}^{r}}(u)$ contains a sunflower $\mathcal{S}$ with $t$ petals and core $C$ of size $\ell-1$. Set $\mathcal{S}=\{E_1,\ldots,E_t\}$, where $E_i\in {\mh_{G}^{r}}(u)$. Then $E_i\cap E_j=C$ for all distinct $i,j\in [t]$ and $|C|=\ell-1$. Thus  $\mathcal{S}'=\{E_1\cup\{u\},\ldots,E_t\cup\{u\}\}$
  is a sunflower  in $\mh_{G}^{r}$ with core $C \cup \{u\}$ and $t$ petals.

  For any $E\in \mh_{G}^{r}$ with $u\in E$, we have $|E\cap ((\cup_{i=1}^t(E_i\cup\{u\}))\setminus(E_j\cup\{u\}))|=\ell$ for any $1\le j\le t$ by $\mathcal{H}_G^{r}$ being $t$-wise $\{0,\ell\}$-intersecting.  Hence  $E\cap ((\cup_{i=1}^t(E_i\cup\{u\}))\setminus(E_j\cup\{u\}))=C \cup \{u\}$ for any $1\le j\le t$.
  That is, every hyperedge containing $u$ contains $C \cup \{u\}$.
  Therefore, $C\cup \{u\} \in \mathcal{C}(G)$ by the definition which implies
   $U(G)\subseteq\bigcup_{C\in \mathcal{C}(G)}C$.

   Let $C\in \mathcal{C}(G)$ and $u\in C$. Then the link hypergraph ${\mh_{G}^{r}}(u)$ of $u$ is a $t$-wise $(\ell-1)$-intersecting $(r-1)$-graph of size at least $(r-1)!(t-1)^{r-1}+1>(r-1)!(t-1)^{r-1}$. By the same argument as above and the definition of $\mathcal{F}_C$, the result holds. \q
%
%


\begin{claim}\label{claim: Ccap}
    For any  $C, C'\in \mathcal{C}(G)$, we have $C\cap C'=\emptyset$ or $C=C'$.
\end{claim}

\noindent
    \textbf{Proof of Claim~\ref{claim: Ccap} }Assume $C\cap C'\not=\emptyset$, say
     $u\in C\cap C'$. We will show that $C=C'$. By Claim \ref{claim: U=C},
    there are  sunflowers $\mathcal{S}$ and $\mathcal{S}'$, say $\mathcal{S}=\{E_1,\ldots,E_t\}$ and $\mathcal{S}'=\{E_1',\ldots,E_t'\}$, in $\mh_{G}^{r}$  with $t$ petals whose core is $C$ and $C'$ respectively. Suppose there is  $v\in C\setminus C'$ or $v\in C'\setminus C$, say $v\in C\backslash C'$.
    By the same argument as that of Claim  \ref{claim: U=C}, we have $E_i'\cap (\cup_{j=1}^{t-1}E_{j})=C$ for any $1\le i\le t$. So we have $v\in E_i'$ for any $1\le i\le t$ which implies $v\in C'$, a contradiction.\q
\
\begin{claim}\label{claim: AC}
    For every $A\in \mh_{G}^{r}$ and $C\in \mathcal{C}(G)$, we have either $C\subseteq A$ or $A\cap C=\emptyset$.
\end{claim}

\noindent
    \textbf{Proof of Claim~\ref{claim: AC} }
    Suppose, to the contrary, that $A\cap C \neq \emptyset$ and $C \not\subseteq A$.
    By  Claim \ref{claim: Ccap},
    there is a sunflower $\mathcal{S}$ in $\mh_{G}^{r}$ with core $C$ and $t$ petals.
   Since $A\in \mh_{G}^{r}$ and $A\cap C \neq \emptyset$, by the same argument as that of Claim  \ref{claim: U=C}, we have $C \subseteq A$, a contradiction.\q

We now prove Lemma~\ref{lem 1}.
Start with $G_0=G$.
If there is a vertex $u\in V(G_i)$ with $d_{\mh_{G_i}^{r}}(u)<(r-1)!(t-1)^{r-1}+1$, delete $u$ from $G_i$ to obtain $G_{i+1}$; otherwise, stop.
Suppose that we stop at $G_{k}$.
Then $G_{k}$ is the $t$-wise $(K_{r},L)$-intersecting if $V(G_{k})\not= \emptyset$ and
\begin{equation*}
    |\mh_{G}^{r}|\leq |\mh_{G_k}^{r}|+(r-1)!(t-1)^{r-1}k\leq |\mh_{G_k}^{r}|+(r-1)!(t-1)^{r-1}n.
\end{equation*}
If $V(G_{k})= \emptyset$, then we are done. Now we consider the case that $V(G_{k})\not= \emptyset$. Then every vertex $u\in V(G_{k})$ satisfies $d_{\mh_{G_k}^{r}}(u)\geq (r-1)!(t-1)^{r-1}+1$. So $U(G_{k})=V(G_{k})$.

Since $G_{k}$ is  $t$-wise $(K_{r},L)$-intersecting, Claims~\ref{claim: U=C} and~\ref{claim: Ccap} imply that the sets in $\mathcal{C}(G_k)$ form a partition of $V(G_k)$ into $\ell$-subsets. Hence $\ell \mid |V(G_{k})|$. Let $A\in \mh_{G_k}^{r}$. Then $A\subseteq V(G_{k})$.
By Claim~\ref{claim: AC} and the sets in $\mathcal{C}(G_k)$ forming a partition of $V(G_k)$, we also have $\ell \mid r$. Since $r\neq 2\ell$, $r/\ell\geq 3$.

Now we consider an auxiliary graph $G'$ with $V(G')=\mathcal{C}(G_k)$ and
\begin{equation*}
    E(G')=\left\{\{C,C'\}\in\binom{\mathcal{C}(G_{k})}{2}:~\text{$C\cup C'\subseteq A$ for some $A\in \mh_{G_{k}}^{r}$}\right\}.
\end{equation*}
Thus $|V(G')|=|\mathcal{C}(G_k)|=\frac{|V(G_{k})|}{\ell}$.
\begin{claim}\label{claim: lr}
    We have $|\mh_{G_k}^{r}|=|\mh_{G'}^{\frac{r}{\ell}}|$.
    Moreover, the graph $G'$ is $t$-wise $(K_{r/\ell},\{0, 1\})$-intersecting.
\end{claim}

\noindent
    \textbf{Proof of Claim~\ref{claim: lr} }
    For every hyperedge $A\in \mh_{G_k}^{r}$, the set $\{C\in\mathcal{C}(G_k):~ C\subseteq A\}$ corresponds to an $(r/\ell)$-clique in $G'$ by Claim~\ref{claim: AC} and the definitions. Thus $|\mh_{G_k}^{r}|\leq |\mh_{G'}^{\frac{r}{\ell}}|$.
    Conversely, for a $(r/\ell)$-clique with vertex set $\{C_1,C_2,\ldots,C_{r/\ell}\}$ in $G'$,
    $A=\bigcup_{i=1}^{r/\ell}C_i$ corresponds to an $r$-clique in $G_k$. Thus $|\mh_{G_k}^{r}|\geq |\mh_{G'}^{\frac{r}{\ell}}|$.
    Therefore, $|\mh_{G_k}^{r}|=|\mh_{G'}^{\frac{r}{\ell}}|$.

    Suppose there are $t$ distinct $(r/\ell)$-cliques $K_1,K_2,\ldots,K_t$ in $G'$ such that $|\bigcap_{i=1}^{t}K_i|\geq 2$.
    Write $K_i=\{C_{i,1},C_{i,2},\ldots,C_{i,r/\ell}\}$ for $i=1,2,\ldots,t$.
    Then each $K_i$ corresponds to a $r$-clique $\bigcup_{j=1}^{r/\ell}C_{i,j}$ in $G_k$.
    The corresponding $r$-cliques in $G$ have intersection of size at least $2\ell$, a contradiction.
    Thus $G'$ is $t$-wise $(K_{r/\ell},\{0, 1\})$-intersecting.\q
%

We complete the proof of Lemma~\ref{lem 1}. We have
\begin{equation*}
    \begin{aligned}
        |\mh_{G}^{r}|&\leq |\mh_{G_k}^{r}|+(r-1)!(t-1)^{r-1}n\\
        &=|\mh_{G'}^{r/\ell}|+ (r-1)!(t-1)^{r-1}n\\
        &\leq \Psi_{r/\ell}  \left(\frac{n}{\ell},\{0,1\},t\right)+(r-1)!(t-1)^{r-1}n\\
        &= o(n^2),
    \end{aligned}
\end{equation*}
where the last equality follows from Lemma~\ref{lem: [0,s]}.
\end{proof}

Now we consider the general case $L=\{\ell_1,\ell_2\}\subset [0,r-1]$, where $\ell_1,\ell_2,r$ do not form an arithmetic progression.
If $\ell_1 >0$, then by Theorem \ref{thm: t L}, either $|\mh_{G}^{r}|=o(n^2)$ or every $K_{r}$ in $G$ contains a fixed $K_{\ell_1}$, denoted by $X$.
In the latter case, let $G_X$ be the subgraph of $G$ induced by $\bigcap_{x \in X} N(x)$, where $N(x)=\{u\in V(G):ux\in E(G)\}$. Then $|\mathcal{H}_{G}^{r}| = |\mathcal{H}_{G_X}^{r-\ell_1}|$, and $G_X$ is $t$-wise $(K_{r-\ell_1},\{0,\ell_2-\ell_1\})$-intersecting. This reduces the proof to the case $\ell_1 = 0$, which follows from Lemma \ref{lem 1}.

For the remainder of this section, we prove Theorem~\ref{thm 3} for $s\ge 3$.
We need the following structural theorem.
\begin{theorem}[F\"{u}redi \cite{FUREDI1983129}]\label{furedi stru}
    For integers $p\ge r+1\ge 4$, there exists a positive constant $c=c(p,r)$ such that every $r$-graph $\mathcal{F}$ contains a subhypergraph $\mathcal{F}^*\subset\mathcal{F}$ satisfying the following properties:

    \noindent
    (i) $|\mathcal{F}^*|\ge c|\mathcal{F}|$.

    \noindent
    (ii) The families $\mathcal{I}(F)=\{F\cap F': F'\in \mathcal{F}^*\setminus\{F\}\}$ are isomorphic for all $F\in \mathcal{F}^*$.

    \noindent
    (iii) For any $A\in \mathcal{I}(F)$ with $F\in\mathcal{F}^*$, $A$ is the core of a $p$-sunflower in $\mathcal{F}^*$.

    \noindent
    (iv) For any $F\in \mathcal{F}^*$ and distinct $A_1,A_2\in \mathcal{I}(F)$, $A_1\cap A_2\in \mathcal{I}(F)$.
\end{theorem}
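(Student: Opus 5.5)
The plan is to pass to an $r$-partite subfamily, so that intersection patterns of edges can be recorded as subsets of $[r]$. Then I would pigeonhole on these patterns and clean the family iteratively until the patterns stabilise. The constant $c(p,r)$ will be a product of factors that depend only on $p$ and $r$.

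\emph{Step 1 (reduction to $r$-partite).} Partition the vertex set uniformly at random into $V_1,\dots,V_r$ and keep only the edges that meet every part in exactly one vertex. The expected number of surviving edges is $\frac{r!}{r^r}|\mathcal{F}|$, so some partition keeps at least this many. In the resulting family each $F$ has the form $\{x_1,\dots,x_r\}$ with $x_j\in V_j$. For $A\subseteq F$ let $\pi(A)=\{j: x_j\in A\}\subseteq[r]$. Two sets $A,A'$ of the same projection inside two edges are then compared coordinatewise.

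\emph{Step 2 (rich patterns and pigeonholing).} Fix a decreasing sequence of thresholds $p_0>p_1>\dots>p_{2^r}\ge p$, with each $p_i$ chosen large in terms of $p_{i+1}$ and $r$. For a current family $\mathcal{F}_i$ and an edge $F\in\mathcal{F}_i$, let
\[K_i(F)=\{S\subseteq[r]: F|_S \text{ is the core of a } p_i\text{-sunflower in } \mathcal{F}_i\}.\]
There are at most $2^{2^r}$ possible values of $K_i(F)$, so some value $K$ is taken by at least a $2^{-2^r}$ fraction of $\mathcal{F}_i$; let $\mathcal{F}_{i+1}$ be this class.

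The point of fixing the pattern is the following. Suppose every $F$ has the same rich pattern $K$ with respect to the current family, and let $F\cap F'=A$ with $\pi(A)=S$. Then $A=F'|_S=F|_S$. If $S\notin K$ then $A$ is not rich for $F$, but $S\notin K=K(F')$ equally says it is not rich for $F'$, so the comparison gives no contradiction by itself. What is needed is that every intersection pattern $S$ that actually occurs lies in $K$. To force this, I would delete, within a pattern class, the edges whose $S$-projection (for $S\notin K$) has link degree at least $2$. A projection that is not rich has a link with no $p_i$-sunflower, so by Lemma~\ref{lemma: sunflower} its degree is at most $r!(p_i-1)^r$. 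Grouping edges by their non-rich projection and keeping one representative per group therefore costs only a factor depending on $p_i$ and $r$. After this cleaning, $\mathcal{I}(F)$ has projection contained in $K$ for every $F$, which gives (iii).

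\emph{Step 3 (stabilisation).} Passing to a subfamily can only destroy sunflowers, so $K_{i+1}(F)\subseteq K_i(F)$ after the loss from $p_i$ to $p_{i+1}$ is absorbed. Hence the pattern can shrink at most $2^r$ times before it stabilises. Once it is stable, a final pigeonhole on the projected family $\{\pi(A):A\in\mathcal{I}(F)\}$ gives (ii): in the $r$-partite setting, equal projected families mean the families $\mathcal{I}(F)$ are isomorphic via $\pi$.

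For (iv), take $A_1,A_2\in\mathcal{I}(F)$. Using that $A_1$ and $A_2$ are cores of $p$-sunflowers with $p\ge r+1$, I would choose petals from the two sunflowers that avoid $F\setminus(A_1\cup A_2)$, where large $p$ supplies enough disjoint petals. Combining these through the stable pattern should give an edge $F''$ with $\pi(F\cap F'')=\pi(A_1)\cap\pi(A_2)$. Alternatively, one can close $K$ under intersection beforehand, since cores of huge sunflowers in an $r$-partite family intersect in a core of a slightly smaller sunflower.

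\emph{Main obstacle.} The delicate step is making the cleaning in Step 2 and the stabilisation in Step 3 compatible. Deleting edges changes which sets are rich, so I must make sure the process terminates with only a constant-factor loss and that richness survives the deletions. My first attempt would be to let $p_i$ decrease rapidly, say $p_i\ge (2^{2^r}r!)\,p_{i+1}^{\,r}$, so that a sunflower of size $p_i$ keeps $p_{i+1}$ petals after a constant-fraction deletion. I would verify this by averaging over the petals of each sunflower rather than over edges.
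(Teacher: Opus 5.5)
The paper does not prove this theorem. It quotes it from F\"uredi and uses it as a black box, so your proposal has to stand on its own, and as written it does not. Step 1 and the projection bookkeeping are fine. The first concrete failure is in Step 2: the claim that a projection $F|_S$ which is not the core of a $p_i$-sunflower has degree at most $r!(p_i-1)^r$ is false. Not being a core only says that the link of $F|_S$ has no $p_i$ \emph{pairwise disjoint} members. The link can still contain huge sunflowers with a nonempty core $C$, and the Erd\H{o}s--Rado lemma then gives only a sunflower with core $F|_S\cup C$. For example, the edges $\{x_1,x_2,y_3^{(j)},\dots,y_r^{(j)}\}$, $1\le j\le N$, form an $r$-partite family in which $\{x_1\}$ is the core of no $2$-sunflower but has degree $N$. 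Your cleaning keeps only one of these $N$ edges, although the family already satisfies (i)--(iv): every intersection is $\{x_1,x_2\}$, the core of an $N$-sunflower. So the cleaning does not cost a bounded factor. What is true is that a non-core $A$ has a link with a transversal $Y$ of size less than $p_i(r-|A|)$, so the edges through $A$ funnel through boundedly many supersets $A\cup\{y\}$. Exploiting this means handling projections level by level, with thresholds chained across levels, and the proposal has no such ingredient.

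The second gap is the one you flag as the main obstacle, and your remedy does not close it. Everything Steps 2--3 certify is relative to an earlier family: $S\in K$ means $F|_S$ is a core of a $p_i$-sunflower in $\mathcal{F}_i$, whereas (iii) needs the petals inside $\mathcal{F}^*$. Pigeonholing onto one pattern class and then cleaning are global deletions, and nothing forces the petals through $F|_S$ to lie in the chosen class. Keeping a constant fraction of all edges can kill every petal of a particular sunflower, so averaging over petals can at best show that a typical sunflower survives. But (iii) must hold for every intersection occurring in $\mathcal{F}^*$.

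For the same reason the iteration never closes up. The inclusion $K_{i+1}(F)\subseteq K_i(F)$ does not follow from ``subfamilies destroy sunflowers'', because lowering the threshold from $p_i$ to $p_{i+1}$ pushes the other way. Even when the class pattern is stable, you only know that the sets in $K$ are cores in the \emph{previous} family, not in the class you pass to. The final pigeonhole for (ii) has the same defect: $\mathcal{I}(F)$ recomputed inside the class $\{F:\pi(\mathcal{I}(F))=\mathcal{J}\}$ may shrink, and differently for different $F$.

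Your fallback for (iv) is false. Take $F=\{x_1,x_2,x_3\}$ together with edges $\{x_1,x_2,y_j\}$ and $\{z_j,x_2,x_3\}$. The cores $\{x_1,x_2\}$ and $\{x_2,x_3\}$ have arbitrarily many petals, but $\{x_2\}$ is the core of no $3$-sunflower. Your first route to (iv) is fine once (ii) holds in projected form and (iii) holds. Let $F\cap F_2=A_2$, and let $P$ be a petal of the $A_1$-sunflower with $(P\setminus A_1)\cap F_2=\emptyset$, which exists since $p>r$. Then $P\cap F_2=A_1\cap A_2$, so $\pi(A_1)\cap\pi(A_2)\in\pi(\mathcal{I}(F_2))=\pi(\mathcal{I}(F))$.
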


Let $G$ be an extremal $n$-vertex $t$-wise $(K_r,L)$-intersecting graph, and let $\mathcal{F}=\mathcal{H}_{G}^{r}$.
Let $p= \max\{r+1,t\}\ge 4$. By Theorem \ref{furedi stru}, there is a subhypergraph $\mathcal{F}^*\subseteq \mathcal{F}$ satisfying the properties (i)--(iv).
By the property (iii) and $p\ge t\ge 4$, for any $A \in \mathcal{I}(F)$, $A$ is the core of a $p$-sunflower in $\mathcal{F}^*$.
Since $\mathcal{F}^*$ is $t$-wise $L$-intersecting,  $|A| \in L$.
By the definition of $\mathcal{I}(F)$, for all distinct $F_1, F_2 \in \mathcal{F}^*$, we have $|F_1 \cap F_2| \in L$.
In other words, we have
    \begin{equation*}
        \text{(v)~~~~~~~~~~ $\mathcal{F}^*$ is $2$-wise $L$-intersecting.}
    \end{equation*}
    The following proposition directly follows.
    \begin{proposition}\label{linear pro}
        For integers $t,r\ge 3$ and $L=\{\ell_1,\ell_2,\ldots,\ell_s\}$ with $0\le \ell_1<\ell_2<\cdots<\ell_s\le r-1$, there exists a positive constant $C=C(r,t)$ such that
        \begin{equation*}
            \Psi_{r}(n,L,t)\le C\Phi_{r}(n,L).
        \end{equation*}
    \end{proposition}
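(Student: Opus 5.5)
The plan is to apply the structural reduction just carried out to an extremal graph and then compare with the two-wise quantity. Let $G$ be an $n$-vertex $t$-wise $(K_r,L)$-intersecting graph with $N(K_r,G)=\Psi_r(n,L,t)$, and set $\mathcal{F}=\mh_G^r$. Put $p=\max\{r+1,t\}$ and apply Theorem~\ref{furedi stru} to $\mathcal{F}$. This yields $\mathcal{F}^*\subseteq\mathcal{F}$ with $|\mathcal{F}^*|\ge c(p,r)|\mathcal{F}|$. Since $p$ depends only on $r$ and $t$, the constant $c=c(p,r)$ depends only on $(r,t)$.

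Next we check property (v), namely that $\mathcal{F}^*$ is $2$-wise $L$-intersecting. Take distinct $F_1,F_2\in\mathcal{F}^*$ and set $A=F_1\cap F_2\in\mathcal{I}(F_1)$. By (iii), $A$ is the core of a $p$-sunflower in $\mathcal{F}^*$, and $p\ge t$. Any $t$ petals of this sunflower are distinct members of $\mathcal{F}$ whose common intersection is exactly $A$. The $t$-wise $L$-intersecting condition therefore gives $|A|\in L$. The only point needing care is that the petals are distinct. This holds because a sunflower with $p\ge 2$ petals consists of distinct sets.

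Hence $\mathcal{F}^*$ is an $n$-vertex $L$-intersecting $r$-graph, so $|\mathcal{F}^*|\le\Phi_r(n,L)$. Combining the bounds gives
\[
\Psi_r(n,L,t)=|\mathcal{F}|\le c^{-1}|\mathcal{F}^*|\le c^{-1}\Phi_r(n,L),
\]
so we take $C(r,t)=1/c(\max\{r+1,t\},r)$.

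No step is a real obstacle. The substantive work is entirely in Theorem~\ref{furedi stru}. The only subtlety is the hypothesis $p\ge r+1\ge 4$, which holds since $r\ge3$.
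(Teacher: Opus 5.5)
Your proposal is correct and matches the paper's argument: apply F\"uredi's structure theorem with $p=\max\{r+1,t\}$, use property (iii) with $p\ge t$ to conclude that $\mathcal{F}^*$ is pairwise $L$-intersecting, and deduce $\Psi_r(n,L,t)=|\mathcal{F}|\le c^{-1}|\mathcal{F}^*|\le c^{-1}\Phi_r(n,L)$. You also state explicitly two points the paper leaves implicit: the petals are distinct, and the constant depends only on $(r,t)$.
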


    Let $A_1,A_2\in \bigcup_{F\in\mathcal{F}^*}\mathcal{I}(F)$ be two distinct sets.
    By the property (iii), $A_i$ is the core of a $p$-sunflower with petals $F_{i,1},\ldots,F_{i,p}$ in $\mathcal{F}^*$ for $i=1,2$.
    Note that $|A_2\setminus A_1|\le r-1$ and each vertex in $A_2\setminus A_1$ can be in at most one petal.
    Then there exists a petal, say $F_{1,1}$, satisfies that $F_{1,1}\cap A_2=A_1\cap A_2$.
    Similarly, since $|F_{1,1}\setminus A_2|\le r<p$, there is a petal, say $F_{2,1}$, satisfies $F_{2,1}\cap F_{1,1} = A_2\cap F_{1,1}=A_1\cap A_2$. By the property (v), we have
    \begin{equation*}
        \text{(vi)~~~~~~~ for two distinct sets } A_1,A_2\in \bigcup_{F\in\mathcal{F}^*}\mathcal{I}(F),\ |A_1\cap A_2|\in L.
    \end{equation*}
    We are ready to prove the following lemma.
    \begin{lemma}\label{key key lem}
        Let $t,r\ge 3$ be integers and  $L=\{\ell_1,\ldots,\ell_s\}\subseteq [0,r-1]$ with $0\le \ell_1<\ell_2<\cdots<\ell_s\le r-1$.
        Let $p = \max\{r+1,t\}$.
        For any $i\in [s-1]$, we have
        \begin{equation*}
            \begin{aligned}
                \Psi_{r}(n,L,t)
                \le c^{-1}\max\bigl\{&
                \Phi_{r}(n,L\setminus\{\ell_i\}),\\
                &\Phi_{\ell_i}(n,\{\ell_1,\ldots,\ell_{i-1}\})
                \Psi_{r-\ell_i}(n-\ell_i,\{0,\ell_{i+1}-\ell_i,\ldots,\ell_s-\ell_i\},t)
                \bigr\},
            \end{aligned}
        \end{equation*}
        where $c=c(p,r)$ is the constant from Theorem~\ref{furedi stru}.
    \end{lemma}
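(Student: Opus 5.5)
Let $G$ be an extremal $n$-vertex $t$-wise $(K_r,L)$-intersecting graph, $\mathcal{F}=\mh_G^r$, and let $\mathcal{F}^*\subseteq\mathcal{F}$ be the subfamily given by Theorem~\ref{furedi stru} with $p=\max\{r+1,t\}$. By (i), $\Psi_r(n,L,t)=|\mathcal{F}|\le c^{-1}|\mathcal{F}^*|$, so it suffices to show that $|\mathcal{F}^*|$ is at most the maximum in the statement. Fix $i\in[s-1]$. By (ii), the families $\mathcal{I}(F)$ are all isomorphic, so whether some member of $\mathcal{I}(F)$ has size $\ell_i$ does not depend on $F$. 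We split into two cases according to this.

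\textbf{Case 1: no $A\in\mathcal{I}(F)$ has size $\ell_i$.} Then for distinct $F,F'\in\mathcal{F}^*$ we have $|F\cap F'|\in L$ by (v), and $|F\cap F'|\neq\ell_i$ because $F\cap F'\in\mathcal{I}(F)$. So $\mathcal{F}^*$ is an $n$-vertex $(L\setminus\{\ell_i\})$-intersecting $r$-graph, and $|\mathcal{F}^*|\le\Phi_r(n,L\setminus\{\ell_i\})$.

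\textbf{Case 2: every $F$ has some $A_F\in\mathcal{I}(F)$ with $|A_F|=\ell_i$.} Consider the family $\mathcal{A}=\{A_F: F\in\mathcal{F}^*\}$ of $\ell_i$-sets. By (vi), distinct members of $\mathcal{A}$ meet in sizes in $L$, and since these intersections have size less than $\ell_i$, they lie in $\{\ell_1,\dots,\ell_{i-1}\}$. Hence $|\mathcal{A}|\le\Phi_{\ell_i}(n,\{\ell_1,\dots,\ell_{i-1}\})$.

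Now fix $A\in\mathcal{A}$ and let $\mathcal{F}_A=\{F\in\mathcal{F}^*: A=A_F\}$. We bound $|\mathcal{F}_A|$ by passing to the common neighbourhood. Let $G_A$ be the subgraph of $G$ induced by $\bigcap_{x\in A}N(x)$, which has at most $n-\ell_i$ vertices. For each $F\in\mathcal{F}_A$, the set $F\setminus A$ is an $(r-\ell_i)$-clique of $G_A$. Moreover, every $(r-\ell_i)$-clique $K$ of $G_A$ gives an $r$-clique $K\cup A$ of $G$. Therefore any $t$ distinct $(r-\ell_i)$-cliques of $G_A$ correspond to $t$ distinct members of $\mathcal{F}$ whose intersection is $A\cup\bigcap K_j$, and this intersection has size in $L$ and at least $\ell_i$. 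It follows that $G_A$ is $t$-wise $(K_{r-\ell_i},\{0,\ell_{i+1}-\ell_i,\dots,\ell_s-\ell_i\})$-intersecting. This gives
\[
|\mathcal{F}_A|\le N(K_{r-\ell_i},G_A)\le\Psi_{r-\ell_i}(n-\ell_i,\{0,\ell_{i+1}-\ell_i,\ldots,\ell_s-\ell_i\},t),
\]
where we use that $\Psi$ is monotone in the number of vertices (add isolated vertices). Summing over $A\in\mathcal{A}$ gives the second term of the maximum. Note that the map $F\mapsto A_F$ only requires choosing one $A_F$ per $F$, so the sets $\mathcal{F}_A$ partition $\mathcal{F}^*$.

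The step needing the most care is Case~2. It passes from the $2$-wise information on $\mathcal{F}^*$, given by (v) and (vi), to a genuinely $t$-wise condition on $G_A$. This works only because $G_A$ is taken to be the whole common neighbourhood graph rather than just $\mathcal{F}_A$, so the $t$-wise condition is inherited directly from $G$. Applying (vi) to the sets $A_F$ is also justified, since each $A_F$ lies in $\bigcup_F\mathcal{I}(F)$.
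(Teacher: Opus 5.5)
Your proof is correct and follows essentially the same route as the paper: Füredi's structure theorem, the split on whether $\ell_i$ occurs as a set size in $\mathcal{I}(F)$, the bound on the family of $\ell_i$-sets via (vi), and the common-neighbourhood reduction to a $t$-wise $(K_{r-\ell_i},\{0,\ell_{i+1}-\ell_i,\ldots,\ell_s-\ell_i\})$-intersecting graph. The only cosmetic difference is that you fix one $A_F$ per $F$ and partition $\mathcal{F}^*$, whereas the paper keeps all $\ell_i$-sets in $\mathcal{I}(F)$ and uses a union bound over the families $\{F\in\mathcal{F}^*: A\subset F\}$.
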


\begin{proof}
Let $G$ be an extremal $n$-vertex $t$-wise $(K_r,L)$-intersecting graph, and let $\mathcal{F}=\mathcal{H}_{G}^{r}$. By Theorem \ref{furedi stru}, there is a subhypergraph $\mathcal{F}^*\subset\mathcal{F}$  satisfying the properties (i)-(vi).
If there is no $A\in\bigcup_{F\in\mathcal{F}^*}\mathcal{I}(F)$ with $|A|=\ell_i$ for some $i\in [s-1]$, then $\mathcal{F}^*$ is an $(L\setminus\{\ell_i\})$-intersecting $r$-graph, and we are done by
\begin{equation*}
    \Psi_{r}(n,L,t)=|\mathcal{F}|\le c^{-1}|\mathcal{F}^*|\le c^{-1}\Phi_{r}(n,L\setminus\{\ell_i\}).
\end{equation*}
From now on, we may assume that there exists $A_i \in\bigcup_{F\in\mathcal{F}^*}\mathcal{I}(F)$ with $|A_i| = \ell_i$ for any $i\in [s-1]$.
By the property (ii), we have that for every $F\in\mathcal{F}^*$ there exists some $A\in\mathcal{I}(F)$ with $|A|=\ell_i$ for any $i\in [s-1]$. Given $i\in [s-1]$ and $F\in\mathcal{F}^*$,
let $\mathcal{A}_F^i=\{A\in\mathcal{I}(F):|A|=\ell_i\}$. Then $\mathcal{A}_F^i\not=\emptyset$.
By the property (vi), $\bigcup_{F\in\mathcal{F}^*}\mathcal{A}_{F}^i$ is an $n$-vertex $\{\ell_1,\ldots,\ell_{i-1}\}$-intersecting $\ell_i$-graph. Hence
\begin{equation*}
    \left|\bigcup_{F\in\mathcal{F}^*}\mathcal{A}_{F}^i\right|\le \Phi_{\ell_i}(n,\{\ell_1,\ldots,\ell_{i-1}\}).
\end{equation*}
For $A\in \bigcup_{F\in\mathcal{F}^*}\mathcal{A}_{F}^i$, let $\mathcal{F}_i^*[A]=\{F\in\mathcal{F}^*:A\subset F\}$ and $\mathcal{F}_i^*(A)=\{F\setminus A:F\in \mathcal{F}_i^*[A]\}$. Note that $A\in\binom{V(G)}{\ell_i}$. Let $N_A=\bigcap_{v\in A}N_{G}(v)$. Then  $G[N_A]$ is $t$-wise $(K_{r-\ell_i},\{0,\ell_{i+1}-\ell_i,\ell_{i+2}-\ell_i,\ldots,\ell_s-\ell_i\})$-intersecting on at most $n-\ell_i$ vertices. Thus
\begin{equation*}
    \begin{aligned}
        |\mathcal{F}_i^*[A]|
        &=|\mathcal{F}_i^{*}(A)|\\
        &\le N(K_{r-\ell_i},G[N_A])\\
        &\le \Psi_{r-\ell_i}(n-\ell_i,\{0,\ell_{i+1}-\ell_i,\ldots,\ell_s-\ell_i\},t).
    \end{aligned}
\end{equation*}
Since every $F\in\mathcal{F}^*$ contains at least one $\ell_i$-set in $\mathcal{A}_{F}^i$, we have
\begin{equation*}
    \begin{aligned}
        |\mathcal{F}^*|&\le \left|\bigcup_{A\in \bigcup_{F\in\mathcal{F}^*}\mathcal{A}_F^i}\mathcal{F}_i^*[A]\right|\\
        &\le\left|\bigcup_{F\in\mathcal{F}^*}\mathcal{A}_{F}^i\right|\Psi_{r-\ell_i}(n-\ell_i,\{0,\ell_{i+1}-\ell_i,\ell_{i+2}-\ell_i,\ldots,\ell_s-\ell_i\},t)\\
        &\le \Phi_{\ell_i}(n,\{\ell_1,\ldots,\ell_{i-1}\})\Psi_{r-\ell_i}(n-\ell_i,\{0,\ell_{i+1}-\ell_i,\ell_{i+2}-\ell_i,\ldots,\ell_s-\ell_i\},t).
    \end{aligned}
\end{equation*}
Together with $|\mathcal{F}|\le c^{-1}|\mathcal{F}^*|$, we are done.
\end{proof}
We still  need the following lemma.
\begin{lemma}[Zhao-Zhang~\cite{zhao2025countingcliquesprescribedintersection}]\label{t count}
    Let $r\ge 3$ be an integer. Let $0 = \ell_1 < \ell_2 < \ell_3 < r$ with $r - \ell_3 = \ell_3 - \ell_2 \neq \ell_2$. Then any $n$-vertex $\{\ell_1, \ell_2, \ell_3\}$-intersecting $r$-graph $\mathcal{H}$ satisfies
    \begin{equation*}
        |\mathcal{H}|\le \binom{n}{2}.
    \end{equation*}
\end{lemma}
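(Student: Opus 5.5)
The plan is to build an injection from $\mathcal{H}$ into $\binom{[n]}{2}$. Concretely, I would show that every hyperedge $F\in\mathcal{H}$ contains an \emph{own pair}: a pair $\{x,y\}\subseteq F$ that lies in no other hyperedge. Sending $F$ to one of its own pairs is then injective, which gives $|\mathcal{H}|\le\binom{n}{2}$. Throughout, write $d=r-\ell_3=\ell_3-\ell_2$, so that $r=\ell_2+2d$, $\ell_3=\ell_2+d$ and $d\neq \ell_2$.

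\textbf{Traces on a fixed edge.} Fix $F\in\mathcal{H}$ and suppose, for contradiction, that every pair of $F$ is covered by another hyperedge. Only hyperedges $F'$ with $|F\cap F'|\in\{\ell_2,\ell_3\}$ help, since intersection $0$ covers nothing. Let $\mathcal{T}_F=\{F\cap F':F'\in\mathcal{H}\setminus\{F\}\}$ be the trace family; its members have sizes $\ell_2$ or $\ell_3$, and together they must cover $\binom{F}{2}$. For two traces $T_1=F\cap F_1$ and $T_2=F\cap F_2$, I would use
\[
|F_1\cap F_2|=|T_1\cap T_2|+|(F_1\setminus F)\cap(F_2\setminus F)|\in\{0,\ell_2,\ell_3\},
\]
together with $|F_i\setminus F|=r-|T_i|\in\{d,\,2d\}$.

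\textbf{Case $|T_1|=|T_2|=\ell_3$.} Then $|T_1\cap T_2|\ge 2\ell_3-r=\ell_2-d$, while the outside parts $F_i\setminus F$ have size $d$ each. So $|T_1\cap T_2|$ is pinned to a short list: $\ell_2$ minus the overlap of the outside parts, or $\ell_3$ minus it. Using $d\ne\ell_2$, I would aim to show that the complements $F\setminus T_i$, each of size $d$, are either equal or disjoint. Then the $\ell_3$-traces would be exactly the complements of the blocks of a partial partition of $F$ into $d$-blocks.

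\textbf{Mixed and $\ell_2$–$\ell_2$ cases.} The same equation should force rigid intersection patterns between $\ell_2$-traces and the $d$-blocks. From these patterns I would locate a pair $\{x,y\}\subseteq F$ that no trace contains. The natural first candidate is a pair inside one $d$-block $B$, since the $\ell_3$-trace $F\setminus B$ misses it; one then checks that every $\ell_2$-trace either misses $B$ or meets it in a single vertex. Alternatively, if $\ell_3$-traces cover the whole of $F$, take $x,y$ in two different blocks and argue similarly. The hypothesis $d\neq\ell_2$ should be exactly what rules out the exceptional configuration, namely the arithmetic progression $0,\ell_2,\ell_3,r$ with $\ell_2=d$. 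In that configuration the Tur\'an-type construction has about $(n/d)^3/27$ edges, which exceeds $\binom n2$, so the hypothesis must enter here.

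\textbf{Main obstacle.} I expect the combinatorial case analysis of the intersection patterns among traces to be the hardest step, in particular showing that the $\ell_2$-traces cannot cover all pairs that the $\ell_3$-traces miss. If the own-pair injection fails in some subcase, my fallback is a weighting argument: give each hyperedge total weight $1$ spread over its uncovered pairs, or over pairs shared with few edges, and show that each pair of $[n]$ receives total weight at most $1$. That would still give $|\mathcal{H}|\le\binom n2$.
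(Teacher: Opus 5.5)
The paper gives no proof of this lemma (it is quoted from Zhao--Zhang), so your proposal must stand on its own. Its central claim is false: an edge of an arbitrary $\{0,\ell_2,\ell_3\}$-intersecting $r$-graph need not have an own pair. Take $r=5$, $\ell_2=1$, $\ell_3=3$ (so $d=2\neq\ell_2$) and vertex set $[5]\cup\{1',\dots,5'\}$. Let the edges be $F=[5]$ and $F_e=([5]\setminus e)\cup e'$ for $e\in\binom{[5]}{2}$, where $e'$ is the primed copy of $e$. Then $|F\cap F_e|=3$, and $|F_e\cap F_f|=|[5]\setminus(e\cup f)|+|e\cap f|$, which is $2+1=3$ if $|e\cap f|=1$ and $1+0=1$ if $e\cap f=\emptyset$. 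So the family is $\{0,1,3\}$-intersecting, yet every pair $\{x,y\}\subseteq F$ lies in $F_e$ for each of the three $e\subseteq[5]\setminus\{x,y\}$.

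The step that breaks is your $\ell_3$--$\ell_3$ case. With $B_i=F\setminus T_i$ and $O_i=F_i\setminus F$ you get $|F_1\cap F_2|=\ell_2+|B_1\cap B_2|+|O_1\cap O_2|$. This only forces $|B_1\cap B_2|+|O_1\cap O_2|\in\{0,d\}$. Partial overlaps of the $d$-blocks are therefore allowed, compensated by overlaps outside $F$, and $d\neq\ell_2$ plays no role (in the example the blocks $e$ pairwise meet in a vertex). The weighting fallback is where the real difficulty would then lie, and it is left entirely unspecified.

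What your case analysis tacitly uses is that traces are closed under intersection. That is property (iv) of Theorem~\ref{furedi stru}, which holds only in F\"uredi's kernel $\mathcal{F}^*$. There your plan does work:
\begin{itemize}
\item distinct $\ell_3$-traces meet in $\ell_2+|B_1\cap B_2|<\ell_3$ elements, so their blocks are disjoint;
\item an $\ell_2$-trace lies inside or misses each $\ell_3$-trace, and distinct $\ell_2$-traces are disjoint;
\item three blocks are impossible, because $F\setminus(B_1\cup B_2\cup B_3)$, a member of $\mathcal{I}(F)$, would have size $\ell_2-d\notin\{0,\ell_2,\ell_3\}$ (this is where $d\neq\ell_2$ enters);
\item with at most two blocks an uncovered pair is easy to exhibit: $x\in B_1$, $y\in B_2$ if there are two blocks; $x\in B_1$, $y\notin B_1$ if there is one; any pair not inside a single $\ell_2$-trace if there are none.
\end{itemize}
This yields $|\mathcal{F}^*|\le\binom n2$, hence $|\mathcal{H}|\le c^{-1}\binom n2$. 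That is enough for the paper's use in \eqref{impor eq}, which only needs $O(n^2)$, but it is not the exact bound the lemma asserts.
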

\vskip.2cm
Now we are going to prove Theorem~\ref{thm 3}.
\vskip.2cm
\noindent
\emph{Proof of Theorem~\ref{thm 3}:}
Since $\ell_1,\ell_2,\ldots,\ell_s,r$ do not form an arithmetic progression, either $r-\ell_s\neq \ell_s-\ell_{s-1}$, or there is some $i\in [s-2]$ such that
\[
r-\ell_s= \ell_s-\ell_{s-1}=\cdots=\ell_{i+2}-\ell_{i+1}\neq \ell_{i+1}-\ell_i.
\]We consider two cases.

\textbf{Case 1:}  $r-\ell_s\neq \ell_s-\ell_{s-1}$.

In this case, $r-\ell_{s-1}\ge 3$. By Lemma \ref{lem 1}, $\Psi_{r-\ell_{s-1}}(n-\ell_{s-1},\{0,\ell_{s}-\ell_{s-1}\},t)=o(n^2)$. Applying Lemma \ref{key key lem} and Theorem \ref{deza}, we obtain, for large $n$,
\begin{equation*}
    \begin{aligned}
        &\Psi_{r}(n,L,t)\\
        &\le c^{-1}\max\bigl\{\Phi_{r}(n,L\setminus\{\ell_{s-1}\}),
                \Phi_{\ell_{s-1}}(n,\{\ell_1,\ldots,\ell_{s-2}\})
        \Psi_{r-\ell_{s-1}}(n-\ell_{s-1},\{0,\ell_{s}-\ell_{s-1}\},t)\bigr\}\\
        &\le c^{-1}\max\biggl\{\prod_{\ell\in L\setminus\{\ell_{s-1}\}}\frac{n-\ell}{r-\ell},
        \Phi_{\ell_{s-1}}(n,\{\ell_1,\ldots,\ell_{s-2}\}) o(n^2)\biggr\}\\
        &=c^{-1}\max\bigl\{O(n^{s-1}),
        \Phi_{\ell_{s-1}}(n,\{\ell_1,\ldots,\ell_{s-2}\})o(n^2)\bigr\}.
    \end{aligned}
\end{equation*}
    By Theorem~\ref{deza}, it is easy to derive that $\Phi_{\ell_{s-1}}(n,\{\ell_1,\ldots,\ell_{s-2}\})=O(n^{s-2})$.
    Here the case when $\ell_{s-1} < 3$ can be solved easily.
    Therefore $\Psi_{r}(n,L,t)=o(n^s)$.

    \textbf{Case 2:}  $r-\ell_s= \ell_s-\ell_{s-1}=\cdots=\ell_{i+2}-\ell_{i+1}\neq \ell_{i+1}-\ell_i$ for some $i\in [s-2]$. 

    By Lemma~\ref{t count}, we have
    \begin{equation*}
        \Phi_{\ell_{i+3}-\ell_i}(n,\{0,\ell_{i+1}-\ell_i,\ell_{i+2}-\ell_i\})\le \binom{n}{2},
    \end{equation*}
    where $\ell_{s+1}=r$.
    Combining Proposition \ref{linear pro}, we have
    \begin{equation}\label{impor eq}
        \Psi_{\ell_{i+3}-\ell_i}(n,\{0,\ell_{i+1}-\ell_i,\ell_{i+2}-\ell_i\},t)=O(n^2).
    \end{equation}
    Applying Lemma \ref{key key lem} and Theorem \ref{deza} again, we obtain, for large $n$,
    \begin{equation*}
        \begin{aligned}
            \Psi_{r}(n,L,t)
            &\le c^{-1}\max\bigl\{\Phi_{r}(n,L\setminus\{\ell_i\}),\\
            &\hspace{4em}
            \Phi_{\ell_i}(n,\{\ell_1,\ldots,\ell_{i-1}\})
            \Psi_{r-\ell_i}(n-\ell_i,\{0,\ell_{i+1}-\ell_i,\ldots,\ell_s-\ell_i\},t)\bigr\}\\
            &\le c^{-1}\max\bigl\{O(n^{s-1}),\\
            &\hspace{4em}
            \Phi_{\ell_i}(n,\{\ell_1,\ldots,\ell_{i-1}\})
            \Psi_{r-\ell_i}(n-\ell_i,\{0,\ell_{i+1}-\ell_i,\ldots,\ell_s-\ell_i\},t)\bigr\}.
        \end{aligned}
    \end{equation*}
    Since $\Phi_{\ell_i}(n,\{\ell_1,\ldots,\ell_{i-1}\})\le n^{i-1}$, it remains to show that
    \begin{equation*}
        \Psi_{r-\ell_i}(n-\ell_i,\{0,\ell_{i+1}-\ell_i,\ldots,\ell_s-\ell_i\},t)
        =O(n^{s-i}).
    \end{equation*}

    When $i=s-2$, it follows from \eqref{impor eq}.
    When $i\le s-3$, let $L'=\{0,\ell_{i+1}-\ell_i,\ldots,\ell_s-\ell_i\}$. By Lemma \ref{key key lem}, Theorem \ref{deza}, and \eqref{impor eq}, we have
    \begin{equation*}
        \begin{aligned}
            &\Psi_{r-\ell_i}(n-\ell_i,\{0,\ell_{i+1}-\ell_i,\ldots,\ell_s-\ell_i\},t)\\
            \le & c_{i}^{-1}\max\bigl\{
            \Phi_{r-\ell_i}(n-\ell_i,L'\setminus\{\ell_{i+3}-\ell_i\}),\\
            &\hspace{5em}
            \Phi_{\ell_{i+3}-\ell_i}(n-\ell_i,\{0,\ell_{i+1}-\ell_i,\ell_{i+2}-\ell_i\})\\
            &\hspace{5em}\cdot
            \Psi_{r-\ell_{i+3}}(n-\ell_{i+3},\{0,\ell_{i+4}-\ell_{i+3},\ldots,\ell_{s}-\ell_{i+3}\},t)
            \bigr\}\\
            \le & c_{i}^{-1}\max\bigl\{O(n^{s-i}),\\
            &\hspace{5em}
            n^2\cdot
            \Psi_{r-\ell_{i+3}}(n-\ell_{i+3},\{0,\ell_{i+4}-\ell_{i+3},\ldots,\ell_{s}-\ell_{i+3}\},t)
            \bigr\}.
        \end{aligned}
    \end{equation*}
    By Proposition \ref{linear pro} and Theorem \ref{deza}~(the case when $r-\ell_{i+3}<3$ can be solved easily), we have
    \begin{equation*}
         \Psi_{r-\ell_{i+3}}(n-\ell_{i+3},\{0,\ell_{i+4}-\ell_{i+3},\ldots,\ell_{s}-\ell_{i+3}\},t)
         =O(n^{s-i-2}).
    \end{equation*}
    Hence $\Psi_r(n,L,t)=O(n^{s-1})$ and we are done.
\hfill $\square$ \par

\section{Proof of Theorem~\ref{thm 4}}\label{sec: proof AP}
In this section, assume that $L =\{\ell_1,\ell_2,\ldots,\ell_s\}\subseteq[0,r-1]$, $s\not\in \{1,r\}$, and $\ell_1,\ell_2,\ldots,\ell_s,r$ form an arithmetic progression with common difference $d$.
For convenience, let $\ell_{s+1} = r$.
The lower bound follows from the following construction, which is taken from~\cite{zhao2025countingcliquesprescribedintersection}.
\begin{enumerate}
    \item Start with the Tur\'an graph $T(\lfloor(n-\ell_1)/d\rfloor, s)$.
    \item Replace each vertex by a clique of size $d$.
    \item Add a clique of size $\ell_1$ and make it complete joint to all remaining vertices.
\end{enumerate}
This graph is $t$-wise $(K_r,L)$-intersecting and contains $(1+o(1))\left(\frac{n-\ell_1}{r-\ell_1}\right)^{s}$ copies of $K_r$.

    We now prove the upper bound. First consider the case when $d\ge 2$. Let $G$ be an extremal $t$-wise $(K_r,L)$-intersecting graph on $n$ vertices. By the lower-bound construction above, $N(K_r,G)=\Omega(n^s)$. Therefore, Theorem~\ref{thm: t L} implies that $|\bigcap_{S\in \mh_{G}^{r}}S|\geq \ell_1$.
    Fix a common $K_{\ell_1}$ in $\bigcap_{S\in \mh_{G}^{r}}S$ and restrict to its common neighborhood. It suffices to prove the upper bound for $\Psi_{r-\ell_1}(n-\ell_1,L_{d,s},t)$, where $L_{d,s}=\{0,d,2d,\ldots,(s-1)d\}$.
    Thus we may assume from now on that $G$ is an extremal $t$-wise $(K_r,L_{d,s})$-intersecting graph on $n$ vertices.

    We call a subset $S\subseteq V(G)$ an {\em atom} if the following conditions hold:
    \begin{enumerate}
        \item $|S|\geq d$,
        \item for any $A \in \mathcal{H}_G^{r}$, either $S \subseteq A$ or $S \cap A = \emptyset$, and
        \item $S$ is inclusion-maximal with respect to these properties.
    \end{enumerate}

    \begin{claim}\label{claim: atom dj}
    For any two atoms $S,S'$, we have $S\cap S'=\emptyset$.
    \end{claim}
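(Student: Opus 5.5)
The natural route is to show that if two atoms meet, their union also satisfies the first two defining conditions, so inclusion-maximality forces the two atoms to coincide.

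Let $S,S'$ be atoms with $S\cap S'\neq\emptyset$, and set $T=S\cup S'$. We check the conditions for $T$.

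Condition 1 holds trivially: $|T|\ge |S|\ge d$.

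For condition 2, take any $A\in\mathcal{H}_G^{r}$. There are two cases.
\begin{itemize}
\item If $A\cap T=\emptyset$, there is nothing to prove.
\item Otherwise $A$ meets $S$ or $S'$; say it meets $S$. Since $S$ is an atom, $S\subseteq A$. Pick $x\in S\cap S'$. Then $x\in A$, so $A\cap S'\neq\emptyset$, and since $S'$ is an atom, $S'\subseteq A$. Hence $T\subseteq A$. The case where $A$ meets $S'$ is symmetric.
\end{itemize}
So every $A\in\mathcal{H}_G^{r}$ either contains $T$ or is disjoint from it.

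Now $T$ satisfies conditions 1 and 2 and contains $S$. Maximality of $S$ gives $T=S$, so $S'\subseteq S$. By the same argument applied to $S'$, we get $S\subseteq S'$. Therefore $S=S'$.

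It follows that two distinct atoms are disjoint. This is the intended reading of the claim: ``any two atoms'' means any two distinct atoms, since an atom is never disjoint from itself (it has $|S|\ge d\ge 1$ elements).

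The only step needing care is the case analysis in condition 2, and in particular the use of a common point $x\in S\cap S'$ to pass from $S\subseteq A$ to $A\cap S'\neq\emptyset$. Beyond that, no earlier results such as Claim~\ref{claim: Ccap} are needed.
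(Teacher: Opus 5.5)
Your proof is correct and follows essentially the paper's argument: show that $S\cup S'$ still satisfies the first two defining conditions of an atom, then invoke inclusion-maximality. Your conclusion $S=S'$ (rather than the paper's bare ``contradiction'') also makes explicit the paper's implicit assumption that the two atoms are distinct.
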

    \noindent
    \textbf{Proof of Claim~\ref{claim: atom dj} }
        Suppose, to the contrary, that there are two atoms $S,S'$ satisfy $S\cap S'\neq \emptyset$.
        For any $A\in \mh_{G}^{r}$, if $A\cap S=\emptyset$, then $A\cap S'=\emptyset$ because $S'$ is an atom.
        If $S\subseteq A$, then $S'\cap A\neq \emptyset$, and hence $S'\subseteq A$.
        Thus $S\cup S'$ is also an atom, a contradiction with the inclusion-maximality.\q

Let $\mathcal{S}=\{S\in\binom{V(G)}{d}:\text{$S$ is an atom}\}$, and define $X_1=\bigcup_{S\in\mathcal{S}}S\subseteq V(G)$. Note that $r = sd$.
We define an auxiliary graph $G'$ with $V(G')=\mathcal{S}$, whose edge set is
\begin{equation*}
    E(G')=\left\{\{S_1,S_2\}\in\binom{\mathcal{S}}{2}: \text{there exists $A\in\mh_{G}^{sd}$ such that $S_1\cup S_2\subseteq A$}\right\}.
\end{equation*}Let $K_{s;t}$ denote the complete $(s+1)$-partite graph with $s$ parts of size $1$ and one part of size $t$.
\begin{claim}\label{claim: K_ free}
 $G'$ is $K_{s;t}$-free.
\end{claim}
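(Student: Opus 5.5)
We argue by contradiction. Suppose $G'$ contains a copy of $K_{s;t}$: vertices $S_1,\ldots,S_s$ forming the $s$ singleton parts, and atoms $T_1,\ldots,T_t$ forming the part of size $t$. Then $\{S_1,\ldots,S_s\}$ is an $s$-clique in $G'$, and each set $\{S_1,\ldots,S_{s-1},T_j\}$ is also an $s$-clique; in fact we only need the edges $S_iT_j$ and $S_iS_{i'}$. The plan is to lift these cliques of atoms to $r$-cliques of $G$ and obtain $t$ distinct members of $\mh_G^{r}$ whose common intersection has size at least $(s-1)d+d=sd=r$, or otherwise not in $L_{d,s}$, contradicting the $t$-wise condition.

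\textbf{Step 1: cliques of atoms give cliques of $G$.} For adjacent atoms $S,S'$ in $G'$ there is $A\in\mh_G^{sd}$ with $S\cup S'\subseteq A$. In particular $G[S\cup S']$ is complete. Each atom $S\in\mathcal S$ lies in some hyperedge, so $G[S]$ is complete. Hence any $s$-clique $\{S_1,\ldots,S_s\}$ of $G'$ gives a complete graph on $\bigcup S_i$. Since the atoms are pairwise disjoint by Claim~\ref{claim: atom dj}, this union has exactly $sd=r$ vertices, so it belongs to $\mh_G^{r}$.

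\textbf{Step 2: build $t$ cliques.} Set $B_j=S_1\cup\cdots\cup S_{s-1}\cup T_j$ for $j\in[t]$. Here $\{S_1,\ldots,S_{s-1},T_j\}$ is an $s$-clique of $G'$, because every $S_i$ is adjacent to every $T_j$ and the $S_i$ are pairwise adjacent. So each $B_j\in\mh_G^{r}$. The $B_j$ are distinct because the $T_j$ are disjoint atoms. Their common intersection contains $S_1\cup\cdots\cup S_{s-1}$, which has size $(s-1)d$. This is the largest element of $L_{d,s}$, so it is not yet a contradiction.

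\textbf{Step 3: use $S_s$ to finish.} Replace one of the sets by $B_0=S_1\cup\cdots\cup S_s$, which is in $\mh_G^{r}$ by Step 1. If $t\ge 3$ we take $B_0,B_1,\ldots,B_{t-1}$. These are $t$ distinct members of $\mh_G^{r}$ with common intersection exactly $S_1\cup\cdots\cup S_{s-1}$, which is still allowed, so this alone fails. The real contradiction must instead come from maximality or structure of atoms. For example, if $T_1,\ldots,T_t$ and $S_s$ all extend the same $(s-1)d$-set, the $t$-wise condition forces every hyperedge containing $S_1\cup\cdots\cup S_{s-1}$ to behave uniformly, and this is what we would exploit.

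\textbf{Main obstacle.} The main obstacle is Step 3: showing that $t+1$ distinct completions $S_s,T_1,\ldots,T_t$ of the common $(s-1)d$-core actually violate the condition. I expect the intended count uses $t$ cliques whose intersection is $S_1\cup\cdots\cup S_{s-1}$ together with a clique meeting this core in a non-multiple of $d$, or the $K_{s;t}$ is meant with the $s$ singletons forming the core. I would first try to show that the whole core intersection lies in $L_{d,s}$ only when it is all $s-1$ atoms, and to find one clique among the $B_j$ that must pick up an extra vertex. Failing that, I would use the definition of $E(G')$ directly: the witnesses $A$ for the edges $S_iT_j$ need not equal $B_j$, and their intersections partially cover atoms in a way that breaks the atom property.
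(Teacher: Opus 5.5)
Your proposal has a genuine gap. Step 3 never produces $t$ members of $\mathcal{H}_G^{r}$ whose common intersection lies outside $L_{d,s}$, and your ``main obstacle'' paragraph concedes this. The trouble is that you only build cliques as unions of whole atoms. Any intersection of such cliques is then itself a union of atoms, hence a multiple of $d$. With at most $s-1$ common atoms it always lands in $L_{d,s}$, so this approach can never give a contradiction. Note also that you never use $d\ge 2$. The claim depends on it: the paper treats $d=1$ separately, and for $d=1$ we have $sd-1=(s-1)d\in L_{d,s}$.

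The missing idea is that in $K_{s;t}$ each $T_j$ is adjacent in $G'$ to \emph{all} $s$ atoms $S_1,\ldots,S_s$, not only to $S_1,\ldots,S_{s-1}$. Adjacency in $G'$ means the union of the two atoms lies in an $sd$-clique. So $G[S_1\cup\cdots\cup S_s\cup T_j]$ is complete, and since atoms are disjoint (Claim~\ref{claim: atom dj}) you get a copy of $K_{sd}+tK_d$ in $G$.

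Now cut through an atom instead of using whole ones:
\begin{itemize}
\item Fix an $(sd-1)$-subset $W\subset S_1\cup\cdots\cup S_s$ and a vertex $v_j\in T_j$ for each $j\in[t]$.
\item The sets $W\cup\{v_j\}$ are $t$ distinct $sd$-cliques, forming a copy of $K_{sd-1}+\overline{K_t}$.
\item Their common intersection is exactly $W$, of size $sd-1$.
\item Because $d\ge 2$, we have $(s-1)d<sd-1<sd$, so $sd-1\notin L_{d,s}$. This contradicts the $t$-wise condition.
\end{itemize}
This is precisely the paper's argument.

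An even shorter route exists. The single hyperedge $W\cup\{v_1\}$ meets the atom that lost a vertex in $d-1\ge 1$ vertices without containing it. That already violates the defining property of an atom, so in fact $G'$ is $K_{s+1}$-free.
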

\noindent
    \textbf{Proof of Claim~\ref{claim: K_ free} }
    Otherwise, a copy of $K_{s;t}$ in $G'$ corresponds to a copy of  $K_{sd}+t K_{d}$ in $G$ by Claim \ref{claim: atom dj}. Then $G$  contains $K_{sd-1}+\overline{K_t}$ as a subgraph, where $\overline{K_t}$ is the  ‌complement of $K_t$. This yields $t$ distinct copies of $K_{sd}$ in $G$ whose intersection has size $sd-1 \notin L$, a contradiction.\q

Let $X_{0}=V(G)\setminus X_1$.
\begin{claim}\label{claim: A contain x is small}
For any $x\in X_0$, we have $|\{A: x\in A\in\mh_{G}^{sd}\}|=o(n^{s-1})$.
\end{claim}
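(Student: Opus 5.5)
The plan is to study the link of $x$ and apply Theorem~\ref{thm: t L} to it. This yields a dichotomy: either the number of cliques through $x$ is already $o(n^{s-1})$, or every clique through $x$ contains a fixed $d$-set. In the second case we show this $d$-set extends to an atom. If that atom has size exactly $d$, then $x\in X_1$, which is impossible. If it has size larger than $d$, we use the Deza--Erd\H{o}s--Frankl-type bound to show that only $O(n^{s-2})$ cliques pass through $x$.

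\emph{Step 1 (the link).} Fix $x\in X_0$ and let $\mathcal{F}_x=\{A\setminus\{x\}: x\in A\in\mh_G^{sd}\}$. This is an $(sd-1)$-graph on at most $n$ vertices. The intersection of $t$ distinct members of $\mathcal{F}_x$ equals the intersection of the corresponding $t$ cliques with $x$ removed. Hence $\mathcal{F}_x$ is $t$-wise $L_x$-intersecting, where $L_x=\{d-1,2d-1,\ldots,(s-1)d-1\}$ (the value $0\in L_{d,s}$ cannot occur, since $x$ is common). Note that $|L_x|=s-1$ and $\min L_x=d-1\ge 1$, because $d\ge 2$. By Theorem~\ref{thm: t L}, either $|\mathcal{F}_x|=o(n^{s-1})$, and we are done, or $|\bigcap_{F\in\mathcal{F}_x}F|\ge d-1$. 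In the latter case we may fix $C$ with $|C|=d-1$ inside every member of $\mathcal{F}_x$. Set $T=C\cup\{x\}$; then $|T|=d$ and $T$ lies in every clique containing $x$. We may also assume $|\mathcal{F}_x|$ is larger than any fixed constant.

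\emph{Step 2 ($T$ satisfies the atom condition).} Take any $A\in\mh_G^{sd}$ with $A\cap T\neq\emptyset$. If $x\in A$, then $T\subseteq A$. Otherwise choose $y\in C\cap A$. Since $\mathcal{F}_x$ is large, Lemma~\ref{lemma: sunflower} gives a sunflower in $\mathcal{F}_x$ with many petals. Its core contains $C$ and has size in $L_x$; the kernel of $\mathcal{F}_x$ has size less than $2d-1$, so we may take the core to be exactly $C$, or else pass to a sub-family with a larger common core and argue identically. Because $A$ has only $sd$ vertices, we can pick $t-1$ petals avoiding $A\setminus T$. Together with $x$, these give $t-1$ cliques whose mutual intersection is exactly $T$. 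Adding $A$, we get $t$ distinct cliques whose intersection is $A\cap T$, which contains $y$ but not $x$. Its size therefore lies in $[1,d-1]$, which is disjoint from $L_{d,s}$, a contradiction. Hence every clique meets $T$ either fully or not at all. Since $|T|\ge d$, the set $T$ is contained in an inclusion-maximal such set $S$, i.e.\ an atom.

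\emph{Step 3 (the two sizes of $S$).} If $|S|=d$, then $S=T\in\mathcal{S}$, so $x\in X_1$, contradicting $x\in X_0$. So $|S|\ge d+1$, and every clique through $x$ contains $S$. The family $\{A\setminus S: S\subseteq A\in\mh_G^{sd}\}$ corresponds to cliques in the common neighbourhood of $S$. Any $t$-wise intersection of these cliques contains $S$, so its size lies in $\{2d,\ldots,(s-1)d\}$, a set of at most $s-2$ values. After subtracting $|S|$, the family is $t$-wise intersecting with at most $s-2$ allowed sizes. By Proposition~\ref{linear pro} together with Theorem~\ref{deza} (small uniformities are trivial), it has $O(n^{s-2})=o(n^{s-1})$ members.

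I expect the main obstacle to be Step 2, and specifically producing $t-1$ cliques through $x$ whose common intersection is exactly $T$ and which avoid a given $A$. If a sunflower with core exactly $C$ is not directly available, I would first apply Theorem~\ref{furedi stru} to $\mathcal{F}_x$ to obtain a dense, homogeneous subfamily. In that subfamily every intersection set is the core of a large sunflower, and $C$, or its appropriate intersection, is among those cores.
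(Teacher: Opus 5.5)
\textbf{There is a genuine gap in Step~2.} Lemma~\ref{lemma: sunflower} gives a sunflower in $\mathcal{F}_x$ whose core $K$ contains $C$ and has $|K|\in L_x$, but nothing forces $K=C$. Knowing the kernel of $\mathcal{F}_x$ tells you nothing about the core of the particular sunflower you get.

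A large family with kernel exactly $C$ need not contain any sunflower with core $C$ and more than two petals. For $d=2$, $s=3$, take the $5$-sets $C\cup\{a,b\}\cup P$ and $C\cup\{c,e\}\cup P'$, where $P,P'$ range over pairwise disjoint pairs. This family is $t$-wise $\{1,3\}$-intersecting, has linearly many members and has kernel $C$. Yet any two members from the same half share $C\cup\{a,b\}$ or $C\cup\{c,e\}$. So ``$|\mathcal{F}_x|$ larger than a constant'' cannot produce $t-1$ cliques through $x$ meeting exactly in $T$.

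Your escape routes also fail:
\begin{itemize}
\item The claim that the kernel has size less than $2d-1$ is unjustified; all cliques through $x$ may share a $2d$-set.
\item If the core is a larger $K$ with $|K|=jd-1$, then $A\cap K$ can have size $d,2d,\dots\in L_{d,s}$. So ``arguing identically'' gives no contradiction.
\item If the kernel $I$ of $\mathcal{F}_x$ has $|I|\ge d$, no family of cliques through $x$ meets in exactly $T$ at all.
\end{itemize}

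\textbf{How to repair it.} What is missing is a counting dichotomy; this is the only place where the assumption that $|\mathcal{F}_x|$ is not small can do any work.
\begin{itemize}
\item Treat $|I|\ge d$ directly. Every clique through $x$ contains $I\cup\{x\}$, which has at least $d+1$ elements, so your Step~3 count gives $O(n^{s-2})$.
\item If $|I|=d-1$, so that $C=I$, apply Theorem~\ref{furedi stru} to $\mathcal{F}_x$ with $p\ge sd+t$.
\begin{itemize}
\item If $\mathcal{I}(F)$ contains no $(d-1)$-set, then $\mathcal{F}_x^*$ is $2$-wise $(L_x\setminus\{d-1\})$-intersecting. Theorem~\ref{deza} then gives $|\mathcal{F}_x|\le c^{-1}|\mathcal{F}_x^*|=O(n^{s-2})$.
\item Otherwise that $(d-1)$-set contains $I$, hence equals $I$, and it is the core of a $p$-sunflower. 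Now your Step~2 goes through. Any atom containing $x$ lies in $I\cup\{x\}$, so $T$ is an atom of size $d$, contradicting $x\in X_0$.
\end{itemize}
\end{itemize}

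\textbf{The paper's route} avoids sunflowers entirely. Since $Q=I\cup\{x\}$ is not an atom, some clique $D$ with $x\notin D$ meets $I$. Suppose some link member $B$ had $B\cap D\subseteq I$. Then $D$, $B\cup\{x\}$ and any $t-2$ further cliques through $x$ would have $t$-wise intersection $D\cap I$, whose size lies in $[1,d-1]$, which is forbidden. Hence every link member meets $D\setminus I$. The cliques through $x$ are therefore covered by at most $sd$ families, each having the common $(d+1)$-set $\{x,y\}\cup I$, and hence each has $O(n^{s-2})$ members.
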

\noindent
    \textbf{Proof of Claim~\ref{claim: A contain x is small} }
    Fix $x \in X_0$.
Recall the link hypergraph of $x$, $\mh_{G}^{sd}(x)$,  where $\mh_{G}^{sd}$ has hyperedge set $\{A\setminus\{x\}: x\in A\in\mh_{G}^{sd}\}$.
Since $G$ is $t$-wise $(K_{sd},L_{d,s})$-intersecting, $\mh_{G}^{sd}(x)$ is $t$-wise $\{d-1,2d-1,\ldots,(s-1)d-1\}$-intersecting.
Let $I=\bigcap_{A \in\mh_{G}^{sd}(x)}A$.
If $|I|\leq d-2$, then Theorem \ref{thm: t L} gives $|\mh_{G}^{sd}(x)|=o(n^{s-1})$, and we are done.
If $|I|\geq d$, then $\mh_{G}^{sd}(x)$ is $t$-wise $\{2d-1,\ldots,(s-1)d-1\}$-intersecting, and Theorem \ref{thm: t L} gives $|\mh_{G}^{sd}(x)|=O(n^{s-2})=o(n^{s-1})$.
The only remaining case is $|I|=d-1$. By $d\ge 2$, $I\not=\emptyset$.

Let $Q=\{x\}\cup I$. Then $A\cap Q\not=\emptyset$ for any $A \in\mh_{G}^{sd}(x)$. Since $x\notin X_1$, we have $Q\notin\mathcal{S}$. We claim that for any $P\in \mathcal{S}$, $Q\not\subseteq P$. Suppose there is $P\in \mathcal{S}$ such that $Q\subseteq P$. Then $P\cap A\not=\emptyset$ for any $A \in\mh_{G}^{sd}(x)$. So $P\cap (A\cup \{x\})\not=\emptyset$ for any $A \in\mh_{G}^{sd}(x)$. Since $P$ is an atom, we have $P\subseteq A\cup \{x\}$ for any $A \in\mh_{G}^{sd}(x)$. Hence we have $P\subseteq I\cup \{x\}$ which implies $P=Q$, a contradiction with $Q\notin\mathcal{S}$.

 Therefore there is $D \in\mh_{G}^{sd}$ such that $Q\cap D\not=\emptyset$ and $Q\not\subseteq D$. Since $Q\not\subseteq D$, we have $x\not\in D$. Since $Q\cap D\not=\emptyset$,  $D\cap I\neq \emptyset$.
For any $B\in \mh_{G}^{sd}(x)$, we claim $B\cap (D\setminus I)\neq \emptyset$. Suppose there is $B\in \mh_{G}^{sd}(x)$ such that $B\cap (D\setminus I)= \emptyset$.
Then we can choose  $B_1,B_2,\ldots,B_{t-2}\in \mh_{G}^{sd}(x)\setminus\{B\}$ (otherwisw the claim holds immediate).
Then $0<|D\cap (B\cup\{x\})\cap \cap _{i=1}^{t-2}(B_i \cup \{x\})|\le d-1$, a contradiction with  $G$ being  $t$-wise $(K_r,L_{d,s})$-intersecting.

Hence $|\mh_{G}^{sd}(x)|\leq\sum_{y\in D\setminus I} |\{A \in\mh_{G}^{sd}:~ x,y \in A\}|$.
Since every hyperedge in $\{A \in\mh_{G}^{sd}:~ x,y \in A\}$ contains $x,y$ and $I$,  this family is $\{2d, 3d, \ldots, (s-1)d\}$-intersecting.
By Theorem~\ref{thm: t L}, we obtain $|\mh_{G}^{sd}(x)| = O(n^{s-2}) = o(n^{s-1})$.\q

Now we complete the proof of Theorem~\ref{thm 4}. Write $n=md+\lambda$ with $0\le \lambda<d$. Recall that $|X_1|=|\mathcal{S}|d$ and $|X_0|=|V(G)|-|X_1|$. Then $|\mathcal{S}|d=|X_1|\le n-\lambda$. For any $A\in \mh_{G}^{sd}\setminus \mh_{G[X_1]}^{sd}$, we have $A\cap X_0\neq\emptyset$. By Claim \ref{claim: A contain x is small}, we have
\begin{equation*}
    \begin{aligned}
        |\mh_{G}^{sd}|&=|\mh_{G}^{sd}\setminus \mh_{G[X_1]}^{sd}|+|\mh_{G[X_1]}^{sd}|\\
        & \leq \sum_{y\in X_0}|\mh_{G}^{sd}(y)|+N(K_{sd},G[X_1])\\
        & \leq o(n^s) + N(K_s,G')\\
        & \le o(n^s)+(1+o(1)) \left(\frac{|\mathcal{S}|}{s}\right)^{s}\\
        & \le o(n^s)+(1+o(1)) \left(\frac{n-\lambda}{sd}\right)^{s}\\
        &= (1+o(1))\left(\frac{n}{sd}\right)^s.
    \end{aligned}
\end{equation*}
The estimate for $N(K_s,G')$ follows from Claim \ref{claim: K_ free} and Theorem~\ref{alon pro}.

    It remains to consider the case $d=1$. Since $s\not\in \{1,r\}$, we have $\ell_1\ge 1$.
    Let $G$ be an extremal $t$-wise $(K_r,L)$-intersecting graph on $n$ vertices. By Theorem~\ref{thm: t L} and the lower-bound construction above, we have $|\bigcap_{S\in \mh_{G}^{r}}S|\geq \ell_1$. Fix a common $K_{\ell_1}$ in $\bigcap_{S\in \mh_{G}^{r}}S$ and denote it by $R$.
    Let $N=\bigcap_{v\in V(R)}N_{G}(v)$. Then $|N|\le n-\ell_1$.

We claim that $G[N]$ is $tK_{r-\ell_1+1}$-free.
Suppose otherwise, it would correspond to a copy of $K_{\ell_1}+t K_{r-\ell_1+1}$ in $G$, which leads to $t$ copies of $K_r$ whose intersection has size $\ell_1-1 \notin L$, a contradiction.
Note that $\chi(tK_{r-\ell_1+1})=r-\ell_1+1=s+1$.
By Theorem~\ref{alon pro}, we have
\begin{equation*}
    \begin{aligned}
        N(K_r,G)&=N(K_{r-\ell_1},G[N])
\le(1+o(1))\binom{s+1-1}{r-\ell_1}\left(\frac{n-\ell_1}{s+1-1}\right)^{r-\ell_1}\\
        &=(1+o(1))\left(\frac{n-\ell_1}{r-\ell_1}\right)^{s}.
    \end{aligned}
\end{equation*}
This completes the proof.

\section*{Declaration of competing interest}
The authors declare that they have no known competing financial interests or personal relationships that could have appeared to influence the work reported in this paper.

\section*{Data availability}
No data was used for the research described in the article.

\section*{Declaration on AI Use}
The authors declare that no artificial intelligence tools were used in the development of the mathematical arguments, proofs, or the selection and verification of citations in this article.

\end{document}